\documentclass[a4paper, 12pt,pdftex
]{amsart}

\usepackage[all]{xy}
\usepackage[latin1]{inputenc}
\usepackage{amsmath,amssymb,amsfonts, amsthm, amscd, latexsym, marginnote}

\usepackage{rotating}
\usepackage[english]{babel}

\usepackage{graphicx}
\setlength{\textwidth}{175mm}
\setlength{\textheight}{220mm} 
\setlength{\oddsidemargin}{0pt}
\setlength{\evensidemargin}{0pt}

\tolerance=1000
\CompileMatrices

\usepackage{tikz}
\usepackage[hyperindex,breaklinks,pdftex]{hyperref} 

\newtheorem{thm}{Theorem}[section]

\newtheorem{cor}[thm]{Corollary}
\newtheorem{lem}[thm]{Lemma}
\newtheorem{prop}[thm]{Proposition}
\theoremstyle{definition}
\newtheorem{defin}[thm]{Definition}
\theoremstyle{definition}
\newtheorem{exm}[thm]{Example}
\newtheorem{rmk}[thm]{Remark}

\newenvironment{mucca}[0]{}{}

\newcommand{\q}[1]{\mathbf{#1}}

\newcommand\Q{\mathbb Q}

\newcommand\Id{\mathrm{Id}}
\newcommand\GL{\mathrm{GL}}

\newcommand{\A}{{\mathbb A}}
\newcommand{\B}{{\mathbb B}}
\newcommand{\R}{{\mathbb R}}
\newcommand{\C}{{\mathbb C}}

\newcommand{\Z}{{\mathbb Z}}

\newcommand{\Ab}[0]{\mathrm{Ab}}

\newcommand{\mH}{\mathcal{H}}
\newcommand{\mC}{\mathcal{C}}
\newcommand{\mL}{\mathcal{L}}
\newcommand{\mF}{\mathcal{F}}
\newcommand{\mS}{\mathcal{S}}
\newcommand{\Br}{\mathcal{B}}
\newcommand{\Sal}{\mathrm{Sal}}
\newcommand{\Aut}{\mathrm{Aut}}
\newcommand{\F}[0]{\mathbb{F}}

\newcommand{\ph}[0]{\varphi}

\newcommand{\into}[0]{\hookrightarrow}

\newcommand{\bin}[2]{  \left( \!\!\! \begin{array}{c} #1 \\ #2  \end{array} \!\!\!
  \right)  }

\newcommand{\qbin}[2]{  \left[ \!\!  \begin{array}{c}  #1 \\ #2
    \end{array} \!\!  \right]  }
    
\def\qed{\ifmmode $\Box$ \else{\unskip\nobreak\hfil
\penalty50\hskip1em\null\nobreak\hfil $\Box$
\parfillskip=0pt\finalhyphendemerits=0\endgraf}\fi}

\newcommand{\eq}[1][r]
       {\ar@<-3pt>@{->}[#1]
        \ar@<-1pt>@{}[#1]|<{}="gauche"
        \ar@<+0pt>@{}[#1]|-{}="milieu"
        \ar@<+1pt>@{}[#1]|>{}="droite"
        \ar@/^2pt/@{-}"gauche";"milieu"
        \ar@/_2pt/@{-}"milieu";"droite"}
\newcommand{\imm}[1][r] {\ar@{^{(}->}[#1]}

\newcommand{\pmu}[0]{{\pm 1}}

\begin{document}

\title[Salvetti complex, spectral sequences, Artin groups]
{Salvetti complex, spectral sequences \\and cohomology of Artin groups}

\author[F.~Callegaro]{Filippo Callegaro}
\address{Dipartimento di Matematica \\ Universit\`a di Pisa \\ Italy}
\email{callegaro@dm.unipi.it}

\date{\today}
\begin{abstract}
%
%
%
%

\textbf{English version:} The aim of this short survey is to give a quick 
introduction to the Salvetti complex as a tool for the study of the cohomology
of Artin groups. In particular we show how a spectral sequence 
induced by a filtration on the complex 
provides a very natural and useful method to study recursively the cohomology 
of Artin groups, simplifying many 
computations. In the last section some examples of applications are presented.

\textbf{French version:}
Le but de ce travail est de donner une br\`eve introduction aux
complexes de Salvetti
comme instrument pour \'etudier la cohomologie des groupes d'Artin.
Nous montrons comment une suite spectrale donn\'ee par une filtration
sur le complexe
va d\'efinir une m\'ethode, utile ainsi que tr\`es naturelle, pour \'etudier
r\'ecursivement la cohomologie
des groupes d'Artin, avec une grande simplification dans les calculs.
Dans la derni\`ere partie du travail nous pr\'esentons des exemples d'applications.
\end{abstract}
\maketitle

\section{Introduction} \label{s:intro}

The classical braid group has been defined in 1925 by Artin (\cite{Artin25}). 
In 1962 Fox and Neuwirth \cite{fox} proved that the group defined by Artin is the fundamental
group of the configuration space $C(\R^2, n)$ of unordered $n$-tuples of distinct points in the 
real plane.  
A more general algebraic definition of Artin groups can be given starting from
the standard presentation of a Coxeter group $W.$ 

Given a Coxeter group $W$ acting on a real vector space $V$ we can consider 
the collection $\mH_W$ of all the hyperplanes $H$ 
which are fixed by 
a reflection $\rho \in W.$ This collection is the \emph{reflection arrangement} of $W.$
In \cite{briesk2} Brieskorn proved that 
the fundamental group of the regular orbit space with respect to the
action of the group $W$  
on the complement of a complexified reflection arrangement is the Artin group $A$ associated to $W.$

We illustrate the case of the braid group, 
that can be considered as the leading example of this construction.
We will use it for several other examples along this paper.
We consider the action, by permuting coordinates, 
of the symmetric group on $n$ letters $\mathfrak{S}_n$ on 
the complex vector space $\C^n$. 
If we restrict this action of $\mathfrak{S}_n$ 
to the space of ordered $n$-tuples of distinct points $F(\C,n)$
we obtain a free and properly discontinuous action. 
The space $F(\C,n)$ is the complement
of the union of the hyperplanes of the form $H_{ij} = \{z_i =z_j \}$ in $\C^n.$
The quotient $C(\C,n)=F(\C,n)/\mathfrak{S}_n$ is the regular orbit space for $\mathfrak{S}_n$ 
and hence its fundamental group is the braid group on $n$ strands $\Br_n,$ 
that is the Artin group associated to $\mathfrak{S}_n.$  

The result of Brieskorn mentioned above
shows the important relation between Artin groups and arrangements of hyperplanes, 
since an Artin group is the fundamental group of a quotient of the complement of a reflection
arrangement. 

Research on arrangements of hyperplanes started with the works 
of E. Fadell, R. Fox, L. Neuwirth, V.I. Arnol$'$d, 
E. Brieskorn, T. Zaslavsky, K. Saito, P. Deligne, A. Hattori and later P. Orlik, L. Solomon, 
H. Terao, M. Goresky, R. MacPherson, C. De Concini, C. Procesi, 
M. Salvetti, R. Stanley, R. Randell, G. Lehrer, A. Bj\"orner, G. Ziegler and many others. 
A basic reference for the subject is \cite{ot}. A more recent reference with many recent developments 
and a wide bibliography on the theory of hyperplane arrangements 
is given by the book (still work in progress) \cite{cxarr}.
Given an arrangement $\mH,$ an important combinatorial invariant is the \emph{intersection lattice}
$L(\mH),$ that is the poset of non-empty intersections of elements of $\mH$ ordered 
by reverse inclusion. 
One of the main problems in the study of arrangements
is to understand the relation between the topology of the complement of the arrangement
and 
its intersection lattice.
For a real arrangement we have a finer combinatorial invariant, the \emph{face poset}
(see Definition \ref{d:face_poset} and \cite{ot}).
In \cite{salv87} Salvetti
introduced a CW-complex $\Sal(\mH)$ associated to a real arrangement $\mH$ and 
determined by the face poset of $\mH.$ He proved that this complex is 
homotopy equivalent to the complement of the complexified arrangement.
Moreover if $\mH$ is associated to a reflection group $W,$ the
group $W$ acts on the complex $\Sal(\mH)$ and the quotient complex $X_W$ is homotopy
equivalent to the regular orbit space of $W$ (see \cite{salvetti94, decsal96}). An extension of these results
for an oriented matroid can be found in \cite{gelryb}. For a general complex
arrangement, in \cite{bz} Bj\"orner and Ziegler construct a finite regular cell
complex with the homotopy type of the complement of the arrangement.

In this short survey we present some methods and useful tools for the study of Artin groups 
through the Salvetti complex. A natural filtration of the complex allows to define a
spectral sequence that can be very helpful in several homology and cohomology computations.
In particular we can use the Salvetti complex to compute the cohomology of Artin groups, 
either with constant coefficients or with a local system of coefficients.
The computation of the cohomology of the Milnor fiber, which is related to a very
interesting abelian local system over a Laurent polynomial ring, plays a special role in this context. 

In Section \ref{s:salvetti} we recall our main notation for arrangement of hyperplanes and the Salvetti complex.
We try to keep the notation introduced in \cite{paris}. 
In Section \ref{s:filtration} we give a general introduction to computations using 
a spectral sequence that arises from
a natural filtration of the Salvetti complex. Finally
in Section \ref{s:examples} we provide a few examples that show how the computations via this spectral sequence
can be applied to the study of the cohomology and homology of braid groups, 
providing a simpler or shorter proof 
for previously known results. A first example is given 
in Section \ref{ss:classical} where we provide a shorter
proof of Fuks's result (see \cite{fuks}) on the homology of braid groups mod $2.$ 
Another example is in Section \ref{ss:rat}: we compute the rational 
cohomology of the commutator subgroup of the braid group
giving a new proof of some results already appeared in \cite{fren}, \cite{mar} and \cite{dps}.
In Section \ref{ss:affine} we show how the Salvetti complex can be modified in order to study recursively affine 
type Artin groups. In Section \ref{ss:nonab} we show how it can be used for computer 
investigations providing the example of a non-abelian local system.

\subsection*{Acknowledgment}

The author would like to thank
the organizing and scientific committees of the 
School ``Arrangements in Pyr\'en\'ees'' held in June 2012 in Pau, 
where the idea of these notes started.
\section{Hyperplane arrangements, Artin groups and Salvetti complex} \label{s:salvetti}

\subsection{Hyperplane arrangements}
We recall some definitions and results on hyperplane arrangements and Artin groups. 
We follow the notation 
of \cite{paris}
and we refer to it for a more detailed introduction. We refer to \cite{ot} for a general
introduction on the subject of hyperplane arrangements.

Let $I$ be an open convex cone in a finite dimensional real vector space $V.$ 
\begin{defin}
A real \emph{hyperplane arrangement} in $I$ is a family $\mH$ of real affine hyperplanes of 
$V$ such that each hyperplane of $\mH$ intersects $I$ and the family $\mH$ is locally finite in $I.$
\end{defin}
\begin{defin} \label{d:face_poset}
A real hyperplane arrangement $\mH$ induces a stratification on the convex cone $I$ into \emph{facets}. 
Given two points $x$ and $y$ in $I$ we say that they belong to the same facet $F$ if for every 
hyperplane $H \in \mH$ either $x \in H$ and $y \in H$ or $x$ and $y$ belong to the same connected component
of $I \setminus H.$
We call the set of all facets $\mS$  the \emph{face poset} of $\mH$ and
we equip $\mS$ with the partial order given by  $F > F'$ if and only if 
$\overline{F} \supset F'.$
\end{defin}

A \emph{face} is a codimension $1$ facet, i.~e. a facet that is contained in 
exactly one hyperplane of the arrangement. A \emph{chamber} of the arrangement is a maximal facet, that
is a connected component $C$ of the complement $$I \setminus \cup_{H \in \mH} H.$$

Let $H$ be a real affine hyperplane and let $v(H)$ be its underlying 
vector space: the \emph{complexified hyperplane}
$H_\C$ is the complex affine hyperplane $H_{\C} := \{ z = x + \imath y, x \in H, y \in v(H) \}$ 
in the complex vector space $V_\C := V \otimes_\R \C.$ 

We recall the definition of the complement of the complexified arrangement: 
$$M(\mH) := (I \oplus \imath V) \setminus \bigcup_{H \in \mH} H_{\C}.$$


Now we consider the case of a Coxeter arrangement. 
Let the couple $(W,S)$ be a Coxeter system and assume that 
the set of generators $S$ is given by linear reflections 
in the vector space $V.$ Then $W$ is a finite subgroup 
of $\GL(V).$ We define the \emph{reflection arrangement} of $W$
as the collection $\mH =\mH_W:= \{H \subset V \mid H 
\mbox{ is the fixed hyperplane of a reflection } \rho \in W \}.$ Given any 
chamber $C$ of the arrangement $\mH$ we 
define the convex cone $I$ associated to $(W,S)$ as the interior of the union
$$
\overline{I} := \bigcup_{w \in W} w\overline{C}.
$$
The complement of the reflection arrangement is given by $M(W):= M(\mH_W).$
The group $W$ acts freely and properly discontinuously on $M(W)$ and we denote by $N(W)$  
the quotient $M(W)/W.$

Let $W$ be a Coxeter group with Coxeter graph $\Gamma.$ 
The fundamental group of the complement $N(W)$ is $A_\Gamma,$ that is the Artin group of type $\Gamma.$ 
The fundamental group of the complement $M(W)$ is the pure Artin group $PA_\Gamma$ (see \cite{bri73}).

\begin{exm} \label{ex:b3}
We consider the example of the group $W=\mathfrak{S}_3$ acting on $I=\R^3$ by permuting coordinates. 
The corresponding reflection arrangement is the given by the hyperplanes $H_{1,2}, H_{1,3}, H_{2,3}$, 
where we define $H_{i,j}= 
\{x \in \R^3 \mid x_i = x_j \}.$
We fix the fundamental chamber $C_0 = \{ x \in \R^3 \mid x_1 < x_2 < x_3 \}$ in the complement of $\mH_W.$
The complement $M(W)$ is the ordered configuration space $F(\C,3),$ while the space $N(W)$ is the unordered 
configuration space $C(\C,3).$ The following is Coxeter graph of $\mathfrak{S}_3$ 
\begin{mucca}
\entrymodifiers={=<4pt>[o][F-]}
\begin{center}
\begin{tabular}{l}
\xymatrix @R=2pc @C=2pc {
 \ar @{-}[r]_(-0.20){s_1} \ar @{-}[r]_(0.80){s_2} & 
}
\end{tabular}
\end{center}\end{mucca}
that is the Coxeter graph of type $\A_2.$ The standard generators of the Coxeter group $\mathfrak{S}_3$ 
are the elements $s_1, s_2$ with relations 
$s_1^2 = s_2^2 = e$ and $s_1s_2s_1 = s_2 s_1 s_2.$ We can 
identify the generator $s_1$ (resp. $s_2$) with the transposition 
$(1,2) \in \mathfrak{S}_3 \mbox{(resp. }(2,3)\mbox{)}.$ 
The fundamental group of $C(\C,3)$ is the classical braid group on three strands $\Br_3$ 
and the fundamental group of $F(\C,3)$ is the pure braid group braid group on three strands $\mathcal{P}\Br_3.$
The braid group $\Br_3$ is generated by the elements $\sigma_1, \sigma_2$ with relation 
$\sigma_1\sigma_2\sigma_1 = \sigma_2\sigma_1\sigma_2$ (see, for example, \cite{bri73}).

\end{exm}

\subsection{The Salvetti complex} 
The key geometric object that we consider in this survey 
is the Salvetti complex. This is a CW-complex which has the 
homotopy type of the complement $M(\mH).$ Moreover in the case of 
finite arrangements the Salvetti complex has a finite number of cells. 
Its explicit description and the simple structure, especially in the
case of reflection arrangements, turn out to be very important for 
filtrations and recursive arguments.

In this survey we don't provide an explicit definition of the Salvetti complex. 
The reader interested on the subject can find the original definition in \cite{salv87}.
An extended definition of can be found in \cite{paris}. Further in this section we provide
a description of the algebraic complexes that compute the homology and cohomology of the 
quotient of Salvetti complex $\Sal(\mH_W)$ by the action of the group $W.$

\begin{thm}[\cite{salv87}]
The complement $M(\mH)$ has the homotopy type of a CW-complex $\Sal(\mH)$ that is a deformation retract of $M(\mH).$
The $k$-cells of the complex $\Sal(\mH)$ are in $1$ to $1$ correspondence with the couples $(C,F)$ where 
$C$ is a chamber of the arrangement and $F$ is a codimension $k$ facet adjacent to the cell $C.$
\end{thm}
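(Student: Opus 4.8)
The plan is to prove Salvetti's theorem by constructing the CW-complex $\Sal(\mH)$ explicitly from the combinatorics of the face poset $\mS$, and then producing an explicit deformation retraction of $M(\mH)$ onto a realization of this complex. The author has said they will not give the explicit definition, so my proof proposal reconstructs the standard argument. First I would set up the cells: to each pair $(C,F)$ with $C$ a chamber and $F$ a facet with $F < C$ in the face poset (equivalently $F$ adjacent to $C$, i.e. $\overline{C} \supset F$), I associate a cell, and I would declare its dimension to be the codimension of $F$. The $k$-cells thus correspond exactly to pairs $(C,F)$ with $\codim F = k$, which is the enumeration claimed in the statement. The attaching maps are dictated by the face poset: a cell $(C,F)$ has in its boundary the cells $(C', F')$ where $F < F'$ and $C'$ is the chamber adjacent to $F'$ lying on the same side as $C$ relative to the hyperplanes not containing $F'$. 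This is the combinatorial heart of the construction.

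Second, I would realize these cells geometrically inside $M(\mH) = (I \oplus \imath V) \setminus \bigcup_H H_\C$. The key geometric idea is that a point of $M(\mH)$ is $x + \imath y$ with $x \in I$ and $y \in V$, and one can record combinatorially (a) the facet $F$ containing the real part $x$ and (b) for each hyperplane $H$ not containing $F$, which side the imaginary displacement pushes the point — this side data picks out a chamber $C$. I would build an explicit map sending each combinatorial cell $(C,F)$ into $M(\mH)$ and verify these images fit together into a subcomplex homeomorphic to $\Sal(\mH)$. The standard device is to choose, for each facet, a representative point and to use the convexity of $I$ together with a radial/scaling argument in the imaginary directions to move an arbitrary point $x + \imath y$ toward one of these model cells.

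Third, and this is where the real work lies, I would produce the deformation retraction. The natural approach is to retract in two phases: first retract the imaginary part so that $\imath y$ lands in a bounded model region determined by the local chamber structure, using that the arrangement is locally finite in $I$ and that the hyperplanes not meeting a given neighborhood can be ignored; then retract the real part $x$ toward the chosen representatives of its facet, staying inside $I \setminus \bigcup H$ in the appropriate stratum. Throughout one must check the homotopy is well-defined across facet boundaries and never crosses a complexified hyperplane $H_\C$, i.e. never simultaneously sends the real part into $H$ and the imaginary part into $v(H)$. Verifying that these local retractions are compatible and glue to a global deformation retraction onto $\Sal(\mH)$ is the main obstacle; it requires the combinatorial fact that the incidence relations among the model cells exactly mirror the adjacency of facets and chambers encoded in $\mS$. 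Once the retraction is established, the homotopy equivalence $M(\mH) \simeq \Sal(\mH)$ follows, and the cell count is read off directly from the indexing set of pairs $(C,F)$, completing the proof.
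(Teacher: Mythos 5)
The paper offers no proof of this statement: it is imported verbatim from Salvetti's 1987 paper, and the survey explicitly declines even to define $\Sal(\mH)$, referring the reader to \cite{salv87} and \cite{paris}. So the only thing to measure your proposal against is the original argument, and on that score your outline does follow the standard route (cells indexed by pairs $(C,F)$, geometric realization via the real part selecting a facet and the imaginary part selecting a chamber, then a retraction). But as written it is a plan rather than a proof, and it contains one concrete combinatorial error. A point $x+\imath y$ with $x$ in the facet $F$ lies off $H_\C$ automatically for every hyperplane $H$ \emph{not} containing $F$ (since $x\notin H$), and all chambers adjacent to $F$ lie on the same side of such an $H$ as $F$ does; the data that distinguishes among the chambers adjacent to $F$ is the side of $y$ with respect to $v(H)$ for the hyperplanes $H$ that \emph{do} contain $F$. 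You attach the side data to the hyperplanes not containing $F$, which is backwards, and this is precisely the dictionary on which the whole construction (and the count of $k$-cells by pairs $(C,F)$ with $F$ adjacent to $C$) rests.

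The larger gap is that the theorem's actual content -- the existence of the deformation retraction -- is named as ``the main obstacle'' and then not supplied. Nothing in your third paragraph goes beyond asserting that local retractions exist and glue; there is no choice of representative points, no explicit homotopy, and no verification that the homotopy avoids every $H_\C$ (i.e.\ never makes the real part lie in $H$ while the imaginary part lies in $v(H)$). In Salvetti's argument this is where essentially all the work is: one builds explicit convex model cells from chosen points of the facets, proves the resulting union is a regular CW-complex whose face relations match the combinatorial ones, and constructs the retraction stratum by stratum, with a separate argument needed for locally finite (possibly infinite) arrangements in the cone $I$. Until those steps are carried out, the proposal establishes the indexing of the cells but not the homotopy equivalence, so it cannot be accepted as a proof of the theorem.
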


If the arrangement $\mH$ is the reflection arrangement of a Coxeter group $W,$ the complex $\Sal(\mH)$ 
is $W$-invariant and the homotopy that gives the retraction from the space $M(\mH)$ to the complex $\Sal(\mH)$ can 
be chosen to be $W$-equivariant. Furthermore, the action on the cells follows from the action of $W$ on the sets of 
chambers and facets. Fix a fundamental chamber $C_0$ for the arrangement $\mH_W.$

\begin{thm}[\cite{salvetti94, decsal96}]
Let $W$ be a Coxeter group. The orbit space $N(W)$ has the same 
homotopy type of the CW-complex $X_W = \Sal(\mH_W)/W.$

The $k$-cells of the complex $X_W$ are in $1$ to $1$ 
correspondence with the facets of $\mH_W$ that are adjacent 
to the fundamental chamber $C_0.$
\end{thm}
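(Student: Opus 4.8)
The plan is to deduce both assertions from the previous theorem by passing to the quotient under the $W$-action, using the standard fact that $W$ acts simply transitively on the chambers of its reflection arrangement $\mH_W$. The homotopy statement will follow by descending the equivariant deformation retraction to orbit spaces, while the enumeration of cells will follow by counting $W$-orbits of the pairs $(C,F)$ that index the cells of $\Sal(\mH_W)$.

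First I would record the cellular $W$-action. By the previous theorem the $k$-cells of $\Sal(\mH_W)$ are indexed by pairs $(C,F)$ with $C$ a chamber and $F$ a codimension-$k$ facet adjacent to $C$; the text already notes that $\Sal(\mH_W)$ is $W$-invariant and that $W$ acts on cells through its action on chambers and facets, i.e. $w\cdot(C,F)=(wC,wF)$. I would then invoke the classical fact that $W$ acts simply transitively on the set of chambers of $\mH_W$. This has two immediate consequences: the action on cells is free, since $w\cdot(C,F)=(C,F)$ forces $wC=C$ and hence $w=e$; and every orbit of cells contains exactly one representative whose chamber is the fundamental chamber $C_0$, obtained by applying the unique $w$ with $wC=C_0$, which carries $(C,F)$ to $(C_0,wF)$ with $wF$ adjacent to $C_0$.

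For the homotopy type, recall that $W$ acts freely and properly discontinuously on $M(W)$, so $M(W)\to N(W)$ is a covering and the induced action on the invariant subcomplex $\Sal(\mH_W)$ is again free and properly discontinuous. A $W$-equivariant deformation retraction $r_t$ of $M(W)$ onto $\Sal(\mH_W)$, whose existence is asserted just before the statement, satisfies $r_t(wx)=w\,r_t(x)$ and therefore descends to a well-defined homotopy $\bar r_t$ on $N(W)=M(W)/W$, exhibiting $X_W=\Sal(\mH_W)/W$ as a deformation retract of $N(W)$; in particular $N(W)$ and $X_W$ have the same homotopy type. Since the action on $\Sal(\mH_W)$ is free and cellular, $X_W$ inherits a CW-structure whose cells are exactly the $W$-orbits of cells, and by the orbit count of the previous paragraph these $k$-cells are in bijection with the codimension-$k$ facets of $\mH_W$ adjacent to $C_0$, as claimed.

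The step requiring the most care is the transition to quotients: one must verify that the chosen equivariant retraction genuinely passes to orbit spaces (which is where freeness and proper discontinuity of the action on $\Sal(\mH_W)$ are used) and that the resulting quotient is a bona fide CW-complex with the expected cells. Both points reduce to the simple transitivity of $W$ on chambers, so I would isolate that fact as the crux of the argument.
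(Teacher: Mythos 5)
Your argument is correct and follows exactly the route the paper indicates: it states this theorem as a cited result of \cite{salvetti94, decsal96} and, in the preceding paragraph, supplies precisely the two ingredients you use, namely the $W$-equivariant deformation retraction of $M(\mH_W)$ onto $\Sal(\mH_W)$ and the cellular action $w\cdot(C,F)=(wC,wF)$; your observation that simple transitivity of $W$ on chambers makes the action on cells free and picks out the unique representative $(C_0,wF)$ in each orbit is the standard way to complete the count. No gap to report.
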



Let $(W,S)$ be the Coxeter system associated to the Coxeter group $W$ and to the fundamental chamber $C_0.$ 
Let $\Gamma$ be the corresponding Coxeter graph. We recall that the nodes of $\Gamma$ are in bijection with the 
elements of $S.$ Since the arrangement $\mH_W$ is locally finite, the facets of the 
arrangement $\mH_W$ that are adjacent to the fundamental chamber $C_0$ are in bijection with the finite parabolic
subgroups of $W$ generated by subsets of $S.$

\begin{cor}[\cite{salvetti94, cms08, cd95}]
Let $(W,S)$ be a Coxeter system. The $k$-cells of the complex $X_W$ are in $1$ to $1$ correspondence with the 
$k$-subsets of $S$ that generate finite parabolic subgroups.
\end{cor}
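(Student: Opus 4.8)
The plan is to combine the preceding theorem, which identifies the $k$-cells of $X_W$ with the codimension-$k$ facets of $\mH_W$ adjacent to the fundamental chamber $C_0$, with an explicit description of these facets in terms of subsets of $S$. First I would attach to each facet $F$ adjacent to $C_0$ the set
$$
T(F) := \{ s \in S \mid F \subseteq H_s \},
$$
where $H_s$ is the reflecting hyperplane of the simple reflection $s$. I would then show that this assignment sets up the desired bijection with subsets $T \subseteq S$ generating finite parabolics, keeping track of the codimension throughout.

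The first step is purely combinatorial-geometric. Since $\mH_W$ is a reflection arrangement, the closure $\overline{C_0}$ of the fundamental chamber is a simplicial cone whose walls are exactly the hyperplanes $H_s$, $s \in S$, and the defining normals of these walls are linearly independent. Consequently the face of $\overline{C_0}$ lying on precisely the walls indexed by a subset $T$ is the relative interior of $\overline{C_0} \cap \bigcap_{s \in T} H_s$, which has codimension $|T|$. This identifies codimension-$k$ faces of $\overline{C_0}$ with $k$-subsets of $S$, and shows $\codim F = |T(F)|$.

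The heart of the proof, and the step I expect to be the main obstacle, is to determine which subsets $T$ actually arise as $T(F)$ for a genuine facet $F$ of the arrangement contained in the open cone $I$. Here I would invoke the theory of the Tits cone: the stabilizer in $W$ of a point $x$ in the relative interior of the face on walls $T$ is precisely the standard parabolic subgroup $W_T = \langle T \rangle$, and such a face meets the interior $I$ of the Tits cone $\overline{I} = \bigcup_{w \in W} w \overline{C_0}$ if and only if $W_T$ is finite. Equivalently, were $W_T$ infinite the reflecting hyperplanes of $W_T$ would accumulate along that face, violating the local finiteness of $\mH_W$ in $I$; the face would then lie on the boundary of $I$ and contribute no facet. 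This is exactly the point where the local finiteness hypothesis on $\mH_W$ and the structure of the Coxeter complex enter, and it is where the finiteness condition on the parabolic is forced.

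Combining the two steps yields the bijection between the codimension-$k$ facets of $\mH_W$ adjacent to $C_0$ and the $k$-subsets $T \subseteq S$ with $W_T$ finite. Together with the theorem identifying the $k$-cells of $X_W$ with such facets, this gives the claimed $1$-to-$1$ correspondence; the case $T = \emptyset$, $k = 0$ recovers the unique $0$-cell corresponding to the chamber $C_0$ itself.
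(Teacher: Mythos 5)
Your proposal is correct and follows essentially the route the paper itself takes: the paper derives the corollary from the preceding theorem (identifying $k$-cells of $X_W$ with codimension-$k$ facets adjacent to $C_0$) together with the remark, justified by local finiteness, that such facets are in bijection with the subsets of $S$ generating finite parabolic subgroups. You simply flesh out that one-line justification with the standard Tits-cone argument (stabilizers of points in the relative interior of a face, and finiteness of $W_T$ as the criterion for the face to meet the interior of the cone), which is exactly the content of the cited references.
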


\begin{exm}
In Figure \ref{fig:B3} there is a picture of the complex $X_W$ for the symmetric group $W=\mathfrak{S}_3$
with set of generators $S = \{s_1, s_2\}.$
The $6$ vertices of the hexagon are all identified to a single vertex corresponding to the empty subset of $S$.
The $6$ edges of the hexagon are identified according to the arrows and correspond to the subsets $\{s_1\}$ and $\{s_2\}.$ The $2$-cell corresponds to the set $S$ itself. 
The complex $X_W$ is homotopy equivalent to the configuration space $C(\C,3).$

\begin{figure}[htb]
 \centering
 \includegraphics{./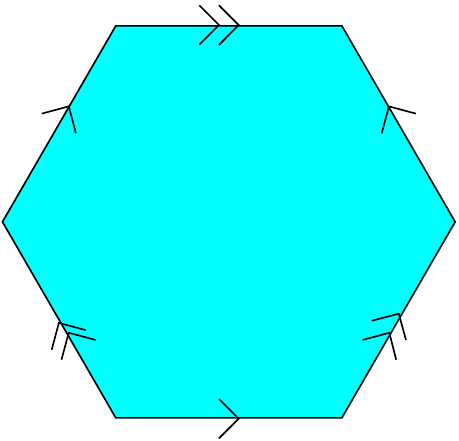}
\caption{} \label{fig:B3}
\end{figure}
 
\end{exm}

In order to provide a complete description of the complexes $\Sal(W)$ and $X_W$ for a given Coxeter system $(W,S)$ 
we need to show how the cells glue together. 
We refer the reader to \cite{salv87} and \cite{salvetti94} (see also \cite{paris}) for this. 
Here we recall the description of
the boundary map for the 
cochain complex of $X_W$ with coefficients in
an assigned local system. 
Let $M$ be a $\Z$-module and let
$$
\lambda: A_\Gamma \to \Aut(M)
$$
be a representation of the fundamental group of $X_W.$ Such a representation determines a local system $\mL_\lambda$
on the complex $X_W.$
Moreover let $(\mC^*, \delta)$ be the algebraic complex associated to the CW-complex $X_W$ that computes the cohomology
$H^*(X_W;\mL_\lambda).$ 
The complex $\mC^*$ is given by a direct sum of some copies of the $\Z$-module $M$ indexed by elements $e_T$
\begin{equation}\label{e:complex}
\mC^k := \bigoplus M.e_T 
\end{equation}
where the sum goes over all the subset $T \subset S$ such that $\mid \! T \! \mid = k$ and the parabolic 
subgroup $W_T$ is finite. The complex $\mC^*$ is graded with $\deg e_T = \mid \! T \! \mid.$

In order to define the differential $\delta$ we recall some well known facts about Coxeter groups and Artin groups.
The first result we need is the following one (see for example Proposition 1.10 in \cite{hump}).
\begin{prop} \label{p:decomp}
Let $(W,S)$ be a Coxeter system with length function $l.$ Any 
element $w \in W$ can be written in a unique way as a product
$w = uv$ with $v \in W_T$ and $u \in \underline{w} \in W/W_T$ 
such that $l(w) = l(u) + l(v).$ \end{prop}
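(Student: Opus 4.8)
The plan is to derive the statement from the standard theory of distinguished (minimal-length) coset representatives, reading $\underline{w}$ as the unique shortest element of the coset $wW_T \in W/W_T$. The two ingredients I would take for granted are the Exchange Condition for the Coxeter system $(W,S)$ and the fact that $(W_T,T)$ is itself a Coxeter system whose length function coincides with the restriction of $l$, so that every $v \in W_T$ admits a reduced expression using only letters of $T$ and having length $l(v)$. First I would fix the coset $wW_T$, let $u$ be an element of minimal length in it (which exists since lengths are nonnegative integers), and record the characterization $l(us) = l(u)+1$ for all $s \in T$: since $us \in wW_T$, minimality gives $l(us) \ge l(u)$, and right multiplication by a generator shifts length by exactly $\pm 1$.

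The heart of the proof is the \emph{length-additivity lemma}: for this minimal $u$ and every $v \in W_T$ one has $l(uv) = l(u) + l(v)$. I would prove it by induction on $l(v)$, the case $v = e$ being trivial. For the step write $v = v's$ with $s \in T$ and $l(v') = l(v)-1$; by induction $l(uv') = l(u) + l(v')$, so concatenating reduced words for $u$ and for $v'$ yields a reduced word for $uv'$, and it remains to show that right multiplication by $s$ raises its length. Assuming instead that $l(uv's) < l(uv')$, the Exchange Condition applied to this reduced word and to $s$ deletes one letter. If the deleted letter lies in the $v'$-block, then $v's = v$ would have length $l(v')-1$, contradicting $l(v) = l(v')+1$; if it lies in the $u$-block, then $uv's = \tilde u\, v'$ with $l(\tilde u) = l(u)-1$, whence $\tilde u = u\,(v(v')^{-1}) \in uW_T = wW_T$ because $v(v')^{-1} = v's(v')^{-1} \in W_T$, contradicting the minimality of $u$. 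Both cases are impossible, so $l(uv) = l(u)+l(v)$.

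With the lemma in hand, existence is immediate: take $u$ the minimal representative and $v = u^{-1}w \in W_T$, so that $l(w) = l(uv) = l(u) + l(v)$. For uniqueness I would first show the minimal representative itself is unique --- if $u, u'$ both realize the minimal length in $wW_T$, then $u' = u\,(u^{-1}u')$ with $u^{-1}u' \in W_T$, and the lemma gives $l(u') = l(u) + l(u^{-1}u')$, forcing $l(u^{-1}u') = 0$ and hence $u = u'$ --- after which $v = u^{-1}w$ is determined. The one delicate point, and the step I expect to be the main obstacle, is the inductive step of the length-additivity lemma: everything hinges on correctly locating the letter removed by the Exchange Condition and translating each of the two cases into a contradiction with either the reducedness of the chosen expression for $v$ or the minimality of $u$ in its coset.
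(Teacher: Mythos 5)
The paper does not actually prove this proposition; it cites Humphreys, Proposition 1.10, and your argument is precisely the standard proof of that result: induction on $l(v)$ via the Exchange Condition, with the case analysis on whether the deleted letter falls in the $u$-block or the $v$-block, followed by uniqueness of the minimal-length representative. Your proof is correct. One caveat about the statement rather than your argument: read literally, uniqueness under the sole conditions $v\in W_T$, $u\in wW_T$, $l(w)=l(u)+l(v)$ fails (in $W=\mathfrak{S}_3$ with $T=\{s_1\}$ and $w=s_2s_1$, both $(s_2,s_1)$ and $(s_2s_1,e)$ qualify); uniqueness holds only once $u$ is required to be the minimal coset representative, which is how the sentence following the proposition in the paper, and your proof, both interpret it.
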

The element $u$ is the unique 
element of minimal length 
in the coset $\underline{w} \in W/W_T$ and it is called 
the \emph{minimal coset representative} of $\underline{w}.$

Given a Coxeter system $(W_\Gamma,S),$ with Coxeter graph $\Gamma,$ and the associated Artin group $A_\Gamma,$ there
is a natural epimorphism $\pi: A_\Gamma \twoheadrightarrow W_\Gamma$ defined by mapping each standard generator 
$g_s$ of $A_\Gamma$ to the corresponding element $s\in W_\Gamma$ for all $s \in S.$ Matsumoto proves the following 
lemma (see also \cite{tits}):
\begin{lem}[\cite{mats}]
Let $(W_\Gamma,S)$ be a Coxeter system. Given an element $w \in W$ expressed as a positive word 
$s_{i_1} \cdots s_{i_l}$ of minimal length $l$ in the generators $s_j \in S,$
the corresponding element $g=g_{s_{i_1}} \cdots g_{s_{i_l}} \in A_\Gamma$ is well defined and does not 
depend on the choice of the word representing $w.$
\end{lem}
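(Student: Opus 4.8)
The plan is to reduce the statement to the purely combinatorial fact, due to Tits, that any two reduced expressions of a fixed element $w \in W_\Gamma$ are connected by a finite sequence of braid moves. Concretely, for $s,t \in S$ let $m(s,t)$ denote the order of $st$ in $W_\Gamma,$ and call a \emph{braid move} the replacement, inside a word in the alphabet $S,$ of an alternating factor $\underbrace{sts\cdots}_{m(s,t)}$ by the alternating factor $\underbrace{tst\cdots}_{m(s,t)}$ of the same length. The word property of $(W_\Gamma,S)$ asserts that if $s_{i_1}\cdots s_{i_l}$ and $s_{j_1}\cdots s_{j_l}$ are two reduced (minimal length) expressions for the same $w,$ then one may be transformed into the other by a finite chain of braid moves, every intermediate word being again a reduced expression for $w.$

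Granting this, the lemma follows directly from the definition of the Artin group. Indeed, $A_\Gamma$ is presented by the generators $g_s,$ $s \in S,$ subject exactly to the braid relations $\underbrace{g_s g_t g_s \cdots}_{m(s,t)} = \underbrace{g_t g_s g_t \cdots}_{m(s,t)}$ whenever $m(s,t) < \infty,$ and to no relation of the form $g_s^2 = e.$ A single braid move on a reduced word for $w$ thus lifts to exactly one application of a defining relation of $A_\Gamma,$ the part of the word lying outside the altered factor being carried along unchanged; hence the two lifted products agree in $A_\Gamma.$ Applying this to each step of the chain provided by the word property, the lifts $g_{s_{i_1}}\cdots g_{s_{i_l}}$ of all reduced words for $w$ coincide, so $g$ depends only on $w.$

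The substance of the argument therefore lies entirely in the word property, which is where I would concentrate the work. I would prove it by induction on $l = l(w),$ using the exchange condition: if $s_{i_1}\cdots s_{i_l}$ is reduced and $l(sw) < l(w)$ for some $s \in S,$ then there is an index $k$ with $sw = s_{i_1}\cdots \widehat{s_{i_k}} \cdots s_{i_l}.$ In the inductive step one compares two reduced words for $w$ according to their leading letters $s$ and $s'$; when these differ, one shows, by repeated use of the exchange condition, that $w$ admits a reduced expression beginning with the alternating word $\underbrace{s s' s \cdots}_{m(s,s')},$ and then each of the two original words is connected by braid moves to this common interpolating form. The main obstacle is precisely this last point: controlling the interaction of the two leading letters and producing the alternating interpolating factor of length $m(s,s'),$ which calls for a careful double induction, on $l(w)$ and on $m(s,s').$ Once that is established the reduction above concludes the proof, and for the word property in its final form one may also invoke \cite{tits}.
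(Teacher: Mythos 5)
Your proposal is correct and takes essentially the same route as the paper, which states this result as Matsumoto's theorem and offers no proof beyond the citations to Matsumoto and Tits: the entire content is the word property (any two reduced expressions are linked by braid moves), and your observation that each braid move is precisely a defining relation of $A_\Gamma$, so that all lifted products coincide, is the standard deduction. Your sketched exchange-condition induction for the word property is not carried to completion, but since you also defer to Tits for that statement, nothing is missing relative to what the paper itself supplies.
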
  
As a consequence the map $\pi$ has a natural set-theoretic section $\psi: W \to A_\Gamma.$ We remark that the 
section $\psi$ defined according to the previous lemma is not a group homomorphism.

Let $<$ be a total ordering on the set $S.$ We can define the coboundary map $\delta$ as follows: 
for a generator $e_T \in \mC^*$ and an element $a \in M$ we have
\begin{equation} \label{e:delta1}
\delta (a.e_T) := \sum_
{
s \in S \setminus T, \mid \!W_{T \cup \{s\}} \! \mid < \infty 
}
(-1)^{\sigma(s,T)+1} \sum_{\underline{w} \in W_{T \cup \{ s\}}/W_T} 
(-1)^{l(w)}\lambda(\psi(w))(a).e_{T \cup \{s\}}
\end{equation}
where $w$ is the minimal length representative of the coset $\underline{w} \in W_{T \cup \{ s\}}/W_T$ 
and $\sigma(s,T)$ is the number of elements of the set $T$ that are strictly smaller than $s$ with
respect to the order $<.$

\begin{thm}[\cite{salvetti94}] 
Let $\mL_\lambda$ be the local system induced on the space $N(W)$ 
by a representation $\lambda$ of the group $A_\Gamma$
on the $\Z$-module $M.$ Let $(\mC^*, \delta)$ be  the complex defined by formulas (\ref{e:complex}) 
and (\ref{e:delta1}) above for the group $W = W_\Gamma.$ We have the following isomorphism:
$$
H^*(\mC^*) = H^*(N(W); \mL_\lambda).
$$
\end{thm}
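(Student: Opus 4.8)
The plan is to reduce the statement to a computation of cellular cochains with local coefficients on the universal cover of $X_W$, and then to match the resulting differential with the explicit formula (\ref{e:delta1}). First I would invoke the homotopy equivalence $N(W) \simeq X_W$ from the theorem of \cite{salvetti94, decsal96} recalled above, together with the identification $\pi_1(X_W) = A_\Gamma$. Since $\mL_\lambda$ is the local system attached to $\lambda\colon A_\Gamma \to \Aut(M)$, standard obstruction-theoretic / covering-space considerations give
$$H^*(N(W);\mL_\lambda) \cong H^*(X_W;\mL_\lambda) \cong H^*\big(\Hom_{\Z[A_\Gamma]}(C_*(\widetilde{X}_W),M)\big),$$
where $\widetilde{X}_W$ is the universal cover and $C_*(\widetilde{X}_W)$ is its cellular chain complex, a complex of \emph{free} $\Z[A_\Gamma]$-modules carrying one basis element $e_T$ for each cell of $X_W$, i.e.\ for each $T \subset S$ with $W_T$ finite. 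The whole problem thus becomes the determination of the $A_\Gamma$-equivariant boundary $\partial$ on $C_*(\widetilde{X}_W)$ and the verification that its $\Z[A_\Gamma]$-linear dual, after the twist by $\lambda$, coincides with $\delta$.

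The heart of the argument — and the step I expect to be the main obstacle — is the explicit determination of $\partial$ in the universal cover. Here I would rely on the combinatorial description of the Salvetti complex: the cell $e_T$ of $X_W$ is the image of the pair $(C_0,F_T)$, where $F_T$ is the facet of the fundamental chamber fixed by the parabolic $W_T$, and its dimension equals $\codim F_T = |T|$. The incidence between a $(k+1)$-cell indexed by $T' = T \cup \{s\}$ and a $k$-cell indexed by $T$ is a \emph{local} datum: in a transverse slice near $F_{T'}$ the arrangement $\mH_W$ restricts to the reflection arrangement of the finite parabolic $W_{T'}$, so the attaching map is governed entirely by the Coxeter geometry of $W_{T'}$. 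Concretely, the $|T|$-dimensional faces appearing in the boundary of the cell $e_{T'}$ that are of ``type $T$'' are indexed by the cosets $\underline w \in W_{T'}/W_T$; lifting to $\widetilde{X}_W$, each such coset contributes the deck transformation $\psi(w)$ attached by Matsumoto's lemma to the minimal-length representative $w$ of $\underline w$, weighted by the sign $(-1)^{l(w)}$ that records the number of walls crossed in passing from $C_0$ to $wC_0$. This is where one must carefully invoke (or re-derive, following \cite{salv87, decsal96}) the algebraic boundary formula for the $W$-equivariant Salvetti complex, and it is the geometrically delicate point.

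It then remains to account for the global sign $(-1)^{\sigma(s,T)+1}$ and to dualize. The sign comes from the choice of orientations: fixing the total order $<$ on $S$ orients each cell $e_T$ through the induced order on $T$, and $\sigma(s,T)$ records the position at which $s$ is inserted into $T$, exactly the standard incidence sign for a complex whose cells are indexed by ordered subsets. Assembling these contributions yields, for the $(k+1)$-cell $e_{T'}$,
$$\partial\, e_{T'} \;=\; \sum_{\substack{s \in T' \\ T = T'\setminus\{s\}}} (-1)^{\sigma(s,T)+1} \sum_{\underline w \in W_{T'}/W_T} (-1)^{l(w)}\, \psi(w)\cdot e_T$$
as a $\Z[A_\Gamma]$-linear map. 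Applying $\Hom_{\Z[A_\Gamma]}(-,M)$ and using that $M$ carries the $A_\Gamma$-action via $\lambda$ turns each coefficient $\psi(w)$ into the operator $\lambda(\psi(w))$ on $M$, so the dual of $\partial$ acting on $a.e_T$ is precisely the right-hand side of (\ref{e:delta1}).

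Finally I would confirm that the formula (\ref{e:delta1}) defines a genuine coboundary, $\delta^2 = 0$, which is automatic once it is identified with the dual of $\partial$ but can also be checked directly via the cocycle identity for $\psi$ coming from Matsumoto's lemma. Combining the chain of isomorphisms of the first paragraph with the identification of differentials then gives $H^*(\mC^*) \cong H^*(N(W);\mL_\lambda)$, as claimed. I expect the routine part to be the sign and module-convention bookkeeping in the last step, and the genuinely substantial part to be the local-to-global incidence computation of $\partial$ described in the second paragraph.
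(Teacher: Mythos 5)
The paper does not prove this statement: it is recalled as a citation to \cite{salvetti94} (see also \cite{decsal96, paris}), so there is no internal proof to compare against. Your outline correctly reconstructs the standard argument of those references --- pass to $X_W\simeq N(W)$, compute cellular cochains of the universal cover as $\Hom_{\Z[A_\Gamma]}(C_*(\widetilde{X}_W),M)$, and identify the equivariant boundary with the coset-sum formula via Matsumoto's lemma and the orientation sign $\sigma(s,T)$ --- and you correctly isolate the one genuinely substantial step, namely the local-to-global determination of the attaching maps of $e_{T'}$ in terms of minimal coset representatives of $W_{T'}/W_T$. That step is invoked rather than carried out, but since it is precisely the content of the cited theorem of De Concini--Salvetti, your sketch is at the same level of completeness as the survey itself; a self-contained proof would require actually establishing that the codimension-one faces of the Salvetti cell $(C_0,F_{T'})$ of type $T$ are indexed by $W_{T'}/W_T$ and that the lifted incidence coefficient is $(-1)^{l(w)}\psi(w)$.
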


We recall the following fundamental result.
\begin{thm}[\cite{deligne}]
If $W$ is a finite linear reflection group, then $N(W)$ is aspherical.
\end{thm}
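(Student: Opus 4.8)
The plan is to reduce the statement to the asphericity of the complexified complement $M(W)$ and then to deduce the latter from the contractibility of the universal cover of the Salvetti complex. First I would use the fact, recalled above, that $W$ acts freely and properly discontinuously on $M(W)$ with quotient $N(W)$, so that the projection $M(W) \too N(W)$ is a regular covering with deck group $W$. Two spaces joined by a covering have the same universal cover, and a space is aspherical exactly when its universal cover is contractible; hence $N(W)$ is aspherical if and only if $M(W)$ is. This reduces the problem to proving that $M(W)$ is a $K(PA_\Gamma,1)$.

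Next I would pass to the Salvetti model. By Salvetti's theorem $M(\mH_W)$ is homotopy equivalent to $\Sal(\mH_W)$, so it suffices to prove that the universal cover $\widetilde{\Sal(\mH_W)}$ is contractible. I would describe this universal cover combinatorially using the cell structure recalled above: its cells are indexed by pairs consisting of a chamber of the reflection arrangement, lifted to the Coxeter complex of $W$, together with a facet in its closure, and the gluing of the cells is governed by \emph{galleries}, that is sequences of chambers in which consecutive members share a common face.

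The combinatorial heart of the argument, following Deligne, is to cover $\widetilde{\Sal(\mH_W)}$ by contractible pieces $U_C$ indexed by chambers $C$ and to control the homotopy type of their intersections through the gallery structure. The essential geometric input is that a reflection arrangement is \emph{simplicial}, i.e.\ every chamber is a simplicial cone. Using this, one proves by induction on the number of hyperplanes of $\mH_W$ that, for any two chambers, the order complex of the poset of minimal galleries joining them — equivalently the corresponding interval in the weak order on $W$ — is contractible. A nerve-type argument then upgrades these local contractibility statements into the contractibility of the whole universal cover, which yields the asphericity of $M(W)$ and therefore of $N(W)$.

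The reduction to $M(W)$ and the passage to the Salvetti model are formal; the main obstacle is the combinatorial lemma asserting contractibility of the gallery intervals. This is precisely where simpliciality of the arrangement is indispensable, and where Deligne's induction becomes delicate, since one must understand how the chamber and gallery structure degenerate when a single hyperplane of $\mH_W$ is removed. I expect this genuinely deep combinatorial statement to be the crux of the proof, with everything else reducing to bookkeeping.
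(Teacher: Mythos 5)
The paper offers no proof of this statement: it is quoted verbatim from Deligne's paper, so there is no internal argument to compare yours against. Judged on its own terms, your reduction steps are sound. The covering $M(W)\to N(W)$ is regular with deck group $W$, higher homotopy groups are preserved under coverings, so asphericity of $N(W)$ is indeed equivalent to asphericity of $M(W)$; and replacing $M(W)$ by $\Sal(\mH_W)$ is legitimate by Salvetti's theorem (this is an anachronistic but perfectly valid modernization of Deligne's original setup, which used his own gallery model of the universal cover rather than the Salvetti complex).

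The problem is that everything after that point is a description of a proof rather than a proof. The contractibility of the universal cover is reduced to the assertion that, for a simplicial arrangement, the poset (or nerve) of minimal positive galleries between two chambers is contractible, and that a nerve-type argument then globalizes this. You correctly identify this as the crux, but you do not state the lemma precisely (which poset, which order, what exactly the induction on the number of hyperplanes removes and how the gallery structure behaves under deletion), let alone prove it. Since the entire content of Deligne's theorem is concentrated in exactly this combinatorial lemma --- the formal reductions you carry out are the easy part --- the proposal as written is an accurate roadmap to Deligne's argument but cannot be accepted as a proof of the statement. To complete it you would need to formulate and establish the key lemma on minimal galleries (Deligne's Theorem 1.12 and its supporting induction), or else cite it explicitly as an external input, as the paper itself does.
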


As a consequence if $W$ is finite the space $N(W)$ is a classifying space 
for $A_\Gamma$ and we have an isomorphism
$$
H^*(N(W); \mL_\lambda) = H^*(A_\Gamma; M_\lambda)
$$
where $M_\lambda$ is the $\Z$-module $M$ considered as 
a $A_\Gamma$-module through the representation $\lambda.$

\subsection{Abelian representations and Poincar\'e series}
We focus now on abelian representations of $A_\Gamma$ since in that case the expression 
of formula (\ref{e:delta1}) became very simple.

\begin{rmk}
We recall how to compute the abelianization 
$A_\Gamma^{\Ab}: = A_\Gamma/[A_\Gamma, A_\Gamma]$
of the group $A_\Gamma.$ 
For a given Coxeter graph $\Gamma$ we consider the graph 
$\overline{\Gamma}$ with vertices set $S,$ the set of vertices 
of $\Gamma$ and with an edge $e_{s,t}$ for the couple $(s,t)$ if and 
only if the element $m(s,t)$ in the Coxeter matrix is odd. 
The abelianization $A_\Gamma^{\Ab}$ is the free abelian group 
generated by the connected components of the graph $\overline{\Gamma}.$ 
The abelianization map $\Ab: A_\Gamma \to A_\Gamma^{\Ab}$ maps each 
standard generator $g_s \in A_\Gamma$ to the generator corresponding 
to the connected component of the graph $\overline{\Gamma}$ 
containing the vertex $s.$
\end{rmk}

If $\lambda$ is an abelian representation, then $\lambda$ factors through 
the abelianization map $\Ab$ and the elements in the image of $\lambda$ commute.

Given a subset $H \subset W$ we define the sum $$H_\lambda := \sum_{w \in H} \lambda(\psi(w)).$$
In particular, given a subset $T \subset S$ that generates the parabolic subgroup $W_T,$ 
we call the sum $(W_T)_\lambda$ the 
\emph{Poincar\'e series  of the group $W_T$ with coefficients in the representation $\lambda$}.

As a consequence of Proposition \ref{p:decomp} we obtain the following formula:
$$(W_{T})_\lambda	
\sum_{\underline{h} \in W_{T \cup \{s\}}/W_T} \lambda(\psi(h)) 
= (W_{T \cup \{s\}})_\lambda$$
where $h$ is the minimal coset representative of $\underline{h} \in W_{T \cup \{s\}}/W_T.$

\begin{exm} \label{ex:rank_one}
We define a representation $\lambda(q): A_\Gamma \to \Aut(L)$, 
where $L = R[q^\pmu]$ is a Laurent polynomial ring with coefficients in a ring $R$ and
$\lambda(q)(g_s)$ is multiplication by $q$ for each standard generator of $A_\Gamma.$
In this case the series $W(q):=W_{\lambda(q)}$ is called the 
\emph{Poincar\'e series} for $W.$
From formula (\ref{e:delta1}) we get
\begin{equation} \label{e:delta2}
\delta (a.e_T) := \sum_
{
s \in S \setminus T, \mid \!W_{T \cup \{s\}} \! \mid < \infty 
}
(-1)^{\sigma(s,T)+1} 
\frac{W_{T \cup \{s\}}(-q)}{W_T(-q)}
.e_{T \cup \{s\}}
\end{equation}
If $W$ is a finite Coxeter group with exponents 
$m_1, \ldots, m_n$ the Poincar\'e series is actually a polynomial 
and the following product formula holds (\cite{sol66}):
$$W(q) = \prod_{i=1}^n (1+q+\cdots+q^{m_i}).$$
\end{exm}

\begin{exm}
An analog of Example \ref{ex:rank_one} is given by a representation 
on the Laurent polynomial ring in two variables  $L = R[q_1^\pmu, q_2^\pmu].$
Let $\Phi$ be a root system with two different root-lengths. 
As an example consider the root systems of type $\mathbb{B}_n$ or any reducible root system. 
Let $W$ be the Coxeter group associated to the root system $\Phi.$
We can define a representation of $W$ on the ring $L$ as follows: if $\alpha$ is a short
root and $s$ is the reflection associated to $\alpha \in \Phi$ the generator $g_s$ maps to 
multiplication by $q_1$ and if $t$ is the reflection associated to a long root $\beta \in \Phi$
$g_t$ maps to multiplication by $q_2.$ The Poincar\'e series for $W_{\B_n}$ with coefficients in such
a representation are computed in \cite{reiner}.
\end{exm}

\begin{exm} \label{ex:sal_b3} We show an explicit computation of the cochain complex $\mC^*$ 
and we compute the coboundary $\delta$ in the case of the Coxeter group $W = W_{\A_2} = \mathfrak{S}_3,$
with coefficients in the local system $\mL_\lambda = \Z[q^\pmu]$ given as in Example \ref{ex:rank_one}.
The complex that we are going to describe computes the cohomology 
of the commutator subgroup of the braid group $\Br_3,$ up to a degree shift (see Theorem \ref{t:shifting}):
$$
H^*(\mC^*)= H^*(\Br_3; \Z[q^\pmu]_\lambda) = H^{*+1}(\Br_3'; \Z).
$$
We recall that the set of standard generators for the group $W$ is $S = \{s_1,s_2 \}.$ Hence the complex $\mathcal{C}^*$ is given by
$$\mathcal{C}^0= \Z[q^\pmu].e_{\varnothing};$$
$$\mathcal{C}^1= \Z[q^\pmu].e_{\{s_1\}} \oplus \Z[q^\pmu].e_{\{s_1\}};$$
$$\mathcal{C}^2= \Z[q^\pmu].e_{\{s_1, s_2\}}.$$
According to the formulas in Example \ref{ex:rank_one}, the Poincar\'e series are given by
$$
W_{\varnothing}(q) = 1; 
$$
$$
W_{\{s_1\}}(q) = W_{\{s_1\}}(q) = 1-q;
$$
$$
W_{\{s_1, s_2\}}(q) = (1-q)(1-q+q^2)
$$
and hence the coboundary is
$$
\delta e_{\varnothing} = (1-q) e_{\{s_1\}} + (1-q)e_{\{s_2\}} 
$$
$$
\delta e_{\{s_1\}} = - \delta e_{\{s_2\}} = (1-q+q^2) e_{\{s_1,s_2\}}.
$$
\end{exm}

\begin{rmk}
The analog construction of the algebraic complex $(\mC^*, \delta)$ can be given for homology.
We have a complex
\begin{equation}\label{e:complex_homology}
\mC_k:= \bigoplus_{\mid \!T \! \mid = k, \mid \!W_{T} \! \mid < \infty} M.e_T 
\end{equation}
with  boundary maps
\begin{equation} \label{e:delta_homology}
\partial (a.e_T) := \sum_
{
s \in T
}
(-1)^{\sigma(s,T)+1} \sum_{\underline{w} \in W_{T }/W_{T\setminus \{ s\}}} 
(-1)^{l(w)}\lambda(\psi(w))(a).e_{T \setminus \{s\}}
\end{equation}
so that $H_*(\mC_*) = H_*(N(W); \mL_\lambda).$ 
\end{rmk}





\section[Filtrations and spectral sequences]{Filtrations and spectral sequences for the Salvetti complex} 
\label{s:filtration}

\subsection{A natural filtration for the Salvetti complex} \label{ss:natural}
In this section we assume that we have a Coxeter graph 
$\Gamma$ with \emph{finite} set of vertices $S$ and 
a corresponding Coxeter group $W=W_\Gamma$ and a 
Coxeter system $(W,S).$ We fix an ordering $<$ on $S$ 
and we assume $S= \{s_1, \cdots, s_N\},$ with $s_1 < \cdots < s_N.$
Moreover we set a $\Z$-module $M$ and a representation 
$\lambda: A_\Gamma \to \Aut(M).$

The ordering on the set $S$ induces a natural decreasing 
filtration on the complex $\mC^*$ defined in Section \ref{s:salvetti}. 
We define the submodule
$$
\mF^k\mC^* := <e_T \mid s_{N-k+1}, \cdots, s_N \in T>.
$$
It is clear from the description of the differential $\delta$ 
(see equation (\ref{e:delta1})) that the submodule 
$\mF^k\mC^*$ is a subcomplex of the complex $(\mC^*, \delta)$ and  we have the inclusions

$$0 = \mF^{N+1}\mC^* \subset \cdots \subset \mF^{k+1}\mC^* \subset \mF^k\mC^* \subset \cdots \subset \mF^0\mC^* = \mC^*.$$

By standard methods (see for example \cite{spa}) we have a spectral sequence associated 
to the complex $(\mC^*, \delta)$ and the filtration $\mF$:
\begin{thm} \label{t:ss}
There is a first-quadrant spectral sequence $(E_r,d_r)$ with $E_0$-term
$$
E_0^{i,j} = \mF^i\mC^j/\mF^{i+1}\mC^j \Longrightarrow H^{i+j}(\mC^*).
$$ 
The $d_0$ differential is the map naturally induced by the differential $\delta$ on the
quotient complex $\mF^i\mC^{i+j}/\mF^{i+1}\mC^{i+j}.$ 
The $E_1$-term of the spectral sequence is given by
$$
E_1^{i,j} = H^{i+j}(\mF^i\mC^*/\mF^{i+1}\mC^*)
$$
and the $d_1$ differential corresponds to the boundary operator of the triple 
$(\mF^{i+2}C^j, \mF^{i+1}C^j, \mF^{i}C^j).$
\end{thm}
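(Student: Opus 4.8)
The plan is to recognize Theorem \ref{t:ss} as a completely standard construction: the spectral sequence of a filtered cochain complex. Since the filtration $\mF$ on $(\mC^*,\delta)$ is decreasing, bounded (it terminates with $\mF^{N+1}\mC^* = 0$ and $\mF^0\mC^* = \mC^*$), and compatible with the differential (each $\mF^k\mC^*$ is a subcomplex, as already observed from the explicit form of $\delta$ in equation (\ref{e:delta1})), the general machinery for filtered complexes applies verbatim. The only genuine content to verify is that the filtration is \emph{exhaustive and bounded}, so that convergence to $H^{i+j}(\mC^*)$ holds and the spectral sequence lies in the first quadrant; everything else is formal.

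First I would set up the associated graded object and define $E_0^{i,j} := \mF^i\mC^{i+j}/\mF^{i+1}\mC^{i+j}$, observing that $\delta$ raises total cohomological degree by one while preserving the filtration, hence it descends to a well-defined $d_0: E_0^{i,j} \to E_0^{i,j+1}$ of bidegree $(0,1)$. This is the statement that $d_0$ is induced by $\delta$ on the quotient complex, as claimed. Next I would define $E_1^{i,j} := H^{i+j}(\mF^i\mC^*/\mF^{i+1}\mC^*)$ as the cohomology of $(E_0^{i,\bullet}, d_0)$, which is precisely the assertion about the $E_1$-term. The $d_1$ differential then arises, in the usual way, from the connecting homomorphism in the long exact cohomology sequence of the short exact sequence of complexes
$$
0 \to \mF^{i+1}\mC^*/\mF^{i+2}\mC^* \to \mF^i\mC^*/\mF^{i+2}\mC^* \to \mF^i\mC^*/\mF^{i+1}\mC^* \to 0,
$$
which is exactly the boundary operator of the triple $(\mF^{i+2}\mC^*, \mF^{i+1}\mC^*, \mF^i\mC^*)$ named in the statement.

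For the later pages and convergence I would invoke the standard definition of $E_r^{i,j}$ in terms of approximate cycles and boundaries,
$$
Z_r^{i} = \{ x \in \mF^i\mC^* \mid \delta x \in \mF^{i+r}\mC^* \}, \qquad
E_r^{i,j} = \frac{Z_r^i \cap \mC^{i+j}}{(Z_{r-1}^{i+1} + \delta Z_{r-1}^{i-r+1}) \cap \mC^{i+j}},
$$
with $d_r$ induced by $\delta$ of bidegree $(r, 1-r)$. Because the filtration is bounded, for each fixed $(i,j)$ the groups $E_r^{i,j}$ stabilize for $r$ large, and the resulting $E_\infty^{i,j}$ is naturally identified with the graded pieces $\mF^i H^{i+j}(\mC^*)/\mF^{i+1}H^{i+j}(\mC^*)$ of the induced filtration on cohomology, giving the convergence $E_0^{i,j} \Longrightarrow H^{i+j}(\mC^*)$. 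First-quadrant-ness follows since $\mC^j = 0$ for $j < 0$ and $\mF^i\mC^* = 0$ for $i > N$, so $E_1^{i,j} = 0$ unless $0 \le i \le N$ and $i+j \ge 0$; after reindexing this is concentrated in the first quadrant.

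The main obstacle is not really an obstacle but a bookkeeping point: one must check that the indexing conventions here (a \emph{decreasing} filtration giving a \emph{cohomological} first-quadrant spectral sequence) line up correctly with the reference \cite{spa}, so that the differentials carry the stated bidegrees and the identification of $d_1$ with the triple boundary map comes out with the right signs. Since the paper explicitly defers to \cite{spa} for the general construction, the proof reduces to this verification together with the observations that $\mF$ is a bounded, $\delta$-stable, decreasing filtration; I would present it as a direct application of that standard theory, checking only these hypotheses and the boundedness needed for convergence.
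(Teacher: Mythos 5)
Your proposal is correct and is essentially the paper's own argument: the paper gives no proof beyond the remark ``By standard methods (see for example \cite{spa})\dots'', and your write-up simply fills in that standard construction (bounded decreasing $\delta$-stable filtration, associated graded, connecting homomorphism of the triple for $d_1$, and the $Z^i_r$ description of the higher pages, which the paper itself records in the following subsection). The only nitpick is that no reindexing is needed for first-quadrant-ness: a generator $e_T$ of $\mF^i\mC^{i+j}/\mF^{i+1}\mC^{i+j}$ satisfies $|T|\ge i$, so $j\ge 0$ automatically.
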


\begin{exm}

In the case of the complex $(\mathcal{C}^*, \delta)$ of Example \ref{ex:sal_b3} ($W=W_{\A_2}$) 
the filtration gives a very easy picture.
The term $\mF^0\mC^*$ is the complex $\mC^*$ itself. The term $\mF^1\mC^*$ is the $\Z[q^\pmu]$-submodule
generated by $e_{\{s_2\}}$ and $e_{\{s_1, s_2\}}.$ The term $\mF^2\mC^*$ is the submodule generated by 
$e_{\{s_1, s_2\}}.$ Finally $\mF^3\mC^*$ is the trivial submodule.
It is easy to see that the quotient $\mF^0\mC^*/\mF^1\mC^*$ is isomorphic to the complex 
$(\mathcal{C}_{\A_1}^*, \delta)$ for $W=W_{\A_1} = \mathfrak{S}_2$ (recall that the 
corresponding Artin group is the braid group $\Br_2 = \Z$), with the correspondence
$$
\iota:\mF^0\mC^*/\mF^1\mC^* \to \mathcal{C}_{\A_1}^*
$$
given by $\iota:[e_{\{s_1\}}] \mapsto e_{\{s_1\}}$ and $\iota:[e_{\varnothing}] \mapsto e_{\varnothing}$.
It is easy to verify that the isomorphism $\iota$ is compatible with the coboundary map $\delta$. Moreover, note that
$\iota$ preserves the natural graduation. We assume that the ring of coefficients $\Z[q^\pmu]$
is naturally graded with degree zero.
The quotient $\mF^1\mC^*/\mF^2\mC^*$ (resp. $\mF^2\mC^*/\mF^3\mC^*$) is 
isomorphic, as a $\Z[q^\pmu]$-module, to $\Z[q^\pmu]$ generated by $[e_{\{s_2\}}]$ 
(resp. $[e_{\{s_1, s_2\}}]$) with graduation shifted by $1$ (resp. $2$). 
Let $\lambda$ be the representation defined in Example \ref{ex:rank_one}. Note that 
$\lambda$ is compatible with the natural inclusion $\Br_m \into \Br_{m+1}$.
Hence we can write the $E_1$-term of the spectral sequence associated to $(\mC^*, \delta)$ as follows
 \begin{center}
 \begin{tabular}{|l}
 \xymatrix @R=1pc @C=1pc {
H^1(\Br_2; \Z[q^\pmu]_\lambda) & 		      & 			\\
H^0(\Br_2; \Z[q^\pmu]_\lambda) & \Z[q^\pmu]_\lambda & \Z[q^\pmu]_\lambda }\\
\hline
 \end{tabular}
 \end{center}

\end{exm}

\subsection{The differentials} The differentials of the spectral sequence given in 
Theorem \ref{t:ss} are induced by 
the coboundary $\delta$ of the complex $\mC^*.$ 
The differential $d_1$ is explicitly described in Theorem \ref{t:ss}. In order to compute the higher differentials
it is useful to control the representatives in $\mC^*$ for the elements of the spectral sequence.

Following the construction in \cite{spa} we define $Z^s_r:= \{c \in \mF^s\mC^* \mid \delta c \in \mF^{s+r}\mC^* \}.$
Given an element $x \in E_r,$ it is represented by a cochain 
$$c  \in {Z^s_r}/{(Z^{s+1}_{r-1} + \delta Z^{s-r+1}_{r-1})},$$
hence by a class $\overline{c} \in  \mF^s\mC^*$ such that 
$\delta \overline{c} \in \mF^{s+r}\mC^*$
modulo the subgroup $$(\delta \mF^{s-r+1}\mC^* \cap \mF^s\mC^*) + \mF^{s+1}\mC^*.$$

The differential $d_r$ on the class $x$ is the map induced by the coboundary $\delta.$
Hence, given 
$b \in {Z^{s+r}_r}/{(Z^{s+r+1}_{r-1} + \delta Z^{s+1}_{r-1})}$ and 
$\overline{b} \in \mF^{s+r}\mC^*/\mF^{s+r+1}\mC^*$ representatives of an element $y \in E_r,$  if $d_r x = y $ 
we have that $\delta c - b \in (Z^{s+r+1}_{r-1} + \delta Z^{s+1}_{r-1})$ and, if $y=0,$ 
$c \in Z^s_{r+1} + Z^{s+1}_{r-1}.$ In an equivalent way we can say that 
$d_rx = b$ if and only 
if $\delta \overline{c} - \overline{b} \in (\mF^{s+r+1}\mC^* + \delta \mF^{s+1}\mC^*).$

Given an element $x \in E_r$ such that $d_r x=0$ we need to lift $x$ to an element $x' \in E_{r+1}.$ We
begin taking a representative $c \in Z^s_{r+1} + Z^{s+1}_{r-1}$ for $x$ and we
choose
a lifting $c' \in Z^s_{r+1}$ 
with $c' = c + \Delta,$ where $\Delta \in Z^{s+1}_{r-1}.$
This means that we need to lift the class $\overline{c}$ to a class 
$\overline{c}' \in \mF^s\mC^*/((\delta \mF^{s-r+1}\mC^* \cap \mF^s\mC^*) + \mF^{s+1}\mC^*)$
taking as a representative for $\overline{c}'$ the element $\overline{c} + \overline{\Delta}$ where 
$\overline{\Delta} \in \mF^{s+1}\mC^*$ and $\delta (\overline{c} + \overline{\Delta}) \in \mF^{s+r+1}\mC^*.$

Working out the spectral sequence we can use Theorem \ref{t:ss} and start at page $E_1$ choosing 
a class $x \in H^*(\mF^s\mC^*/\mF^{s+1}\mC^*)$ and a representative $c_1 \in \mF^s\mC^*$ for $x.$
At the $E_r$-step of the spectral sequence we have a representative $c_r$ for $x$ 
with $\delta c_r \in \mF^{s+r}\mC^*$
and if $d_rc_r =0$
we can choose in $E_{r+1}$ a new representative $c_{r+1} = c_r + \Delta_r$ with 
$\Delta_r \in \mF^{s+1}\mC^*$ and 
$\delta (c_r + \Delta_r) \in \mF^{s+r+1}\mC^*.$

\subsection{Recursion and order of vertices} \label{ss:recursion}
Thanks to the simple structure of the complex $\mC^*$ and the filtration $\mF^*,$ Theorem \ref{t:ss}
can provide a recursive description of the cohomology of the complex $C^*$ and hence of the space $N(W).$
The Coxeter graph of the group $W$ as well as the choice of the ordering
on the set $S$ of vertices of $\Gamma$ play an important role in this. 

Let $\Gamma_{\overline{k}}$ be the full subgraph of $\Gamma$ with vertices $s_1, \ldots, s_{N-k-1}$
and let $\Gamma_{\widetilde{k}}$ be the full subgraph of $\Gamma$ with vertices $s_{N-k+1}, \ldots, s_{N}.$

\begin{prop}\label{p:recursion}
Let $A_\Gamma$ be the Artin group associated to the Coxeter graph $\Gamma.$
Suppose that the parabolic subgroups associated to the graphs $\Gamma_{\overline{k}}$  
and $\Gamma_{\widetilde{k}}$ commute, i.~e.~for every vertex
$s \in s_{1}, \ldots, s_{N-k-1}$ and $t \in a_{N-k+1}, \ldots, s_{N}$ we have $m(s,t) = 2.$ Then the quotient complex
$\mF^k\mC^*/\mF^{k+1}\mC^*$ is isomorphic to the complex $\mC^*(\Gamma_{\overline{k}})[k],$ 
that is the cochain complex that computes the cohomology of the Artin group 
$G_{\Gamma_{\overline{k}}}$ with a graduation shifted by $k.$

The isomorphism $$\mC^*(\Gamma_{\overline{k}})[k] \stackrel{\rho}{\longrightarrow} \mF^k\mC^*/\mF^{k+1}\mC^*$$
is defined as follows: given a subset $T \subset \{s_1, \ldots, s_{N-k-1}\},$ 
the generator $e_{T}$  maps to the equivalence class of the generator $e_{T'},$ with
$T' = T \cup \{ s_{N-k+1}, \ldots, s_N \}.$
\end{prop}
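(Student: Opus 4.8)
The plan is to prove that the stated map $\rho$ is an isomorphism of cochain complexes, by checking directly that it is a bijection on the free generators and that it intertwines the differentials, working summand by summand from formula (\ref{e:delta1}). First I would identify the generators of the quotient. By definition $\mF^k\mC^*/\mF^{k+1}\mC^*$ is spanned by the classes $[e_{T'}]$ for which $\{s_{N-k+1},\ldots,s_N\}\subseteq T'$ while $s_{N-k}\notin T'$, and each such $T'$ is written uniquely as $T' = T\cup\{s_{N-k+1},\ldots,s_N\}$ with $T\subseteq\{s_1,\ldots,s_{N-k-1}\}$. The commutativity hypothesis $m(s,t)=2$ yields the internal direct product decomposition $W_{T'} = W_T\times W_{\Gamma_{\widetilde{k}}}$, so $W_{T'}$ is finite exactly when $W_T$ is (granted that $W_{\Gamma_{\widetilde{k}}}$ is finite, which is what makes $\mF^k\mC^*$ non-trivial in the first place). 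This matches the finiteness condition defining the generators $e_T$ of $\mC^*(\Gamma_{\overline{k}})$, so $\rho$ is a bijection on generators; and since $|T'| = |T|+k$, it raises degree by $k$, exactly the shift recorded by $[k]$.

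Next I would compute the differential induced on the quotient. Applying $\delta$ to $a.e_{T'}$ via (\ref{e:delta1}) and projecting, note that $S\setminus T' = (\{s_1,\ldots,s_{N-k-1}\}\setminus T)\cup\{s_{N-k}\}$. Adjoining $s_{N-k}$ produces a term supported on a set containing all of $\{s_{N-k},\ldots,s_N\}$, hence lying in $\mF^{k+1}\mC^*$ and vanishing in the quotient. Therefore the induced differential runs only over $s\in\{s_1,\ldots,s_{N-k-1}\}\setminus T$, which is precisely the index set of the differential of $\mC^*(\Gamma_{\overline{k}})$ applied to $e_T$.

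Then I would match the three data attached to each surviving summand. The sign satisfies $\sigma(s,T') = \sigma(s,T)$, since each appended vertex $s_{N-k+1},\ldots,s_N$ has index larger than that of $s$ and so contributes nothing to the count; this is the same count governing the differential of $\mC^*(\Gamma_{\overline{k}})$. The finiteness condition $|W_{T'\cup\{s\}}|<\infty$ agrees with $|W_{T\cup\{s\}}|<\infty$ through the decomposition $W_{T'\cup\{s\}} = W_{T\cup\{s\}}\times W_{\Gamma_{\widetilde{k}}}$, valid because $s$ commutes with all of $\Gamma_{\widetilde{k}}$. Finally, this same decomposition identifies the coset space $W_{T'\cup\{s\}}/W_{T'}$ canonically with $W_{T\cup\{s\}}/W_{T}$, matching the inner sums of (\ref{e:delta1}).

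The main obstacle, and the point demanding the most care, is this last identification at the level of \emph{minimal-length} coset representatives: one must show that the minimal representative $w$ of a coset in $W_{T\cup\{s\}}/W_T$ remains the minimal representative of the corresponding coset in $W_{T'\cup\{s\}}/W_{T'}$, with the \emph{same} value of $l(w)$. This rests on two standard facts about Coxeter groups: the length function of a standard parabolic subgroup is the restriction of the ambient length function, and over a commuting product $W_A\times W_B$ lengths are additive, $l(ab)=l(a)+l(b)$. Together these force the $W_{\Gamma_{\widetilde{k}}}$-component of the minimal representative to be trivial and keep $l(w)$ unchanged, so that $(-1)^{l(w)}$ is preserved; moreover $\psi(w)$ then lies in the sub-Artin group generated by $g_{s_1},\ldots,g_{s_{N-k-1}}$, so the factor $\lambda(\psi(w))$ used on both sides coincides. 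Combining these verifications, $\rho$ commutes with the differentials, and being a degree-$(+k)$ bijection on generators it is the asserted isomorphism $\mC^*(\Gamma_{\overline{k}})[k]\stackrel{\rho}{\longrightarrow}\mF^k\mC^*/\mF^{k+1}\mC^*$.
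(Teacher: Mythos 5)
The paper states Proposition \ref{p:recursion} without proof (it is left as a routine verification), and your direct check is exactly the argument that is implicitly intended: identify generators via $T\mapsto T'=T\cup\{s_{N-k+1},\ldots,s_N\}$, kill the $s_{N-k}$-term in the quotient, and match signs, finiteness conditions, cosets and minimal-length representatives through the product decomposition $W_{T'}=W_T\times W_{\Gamma_{\widetilde{k}}}$. Your treatment of the one genuinely non-formal point --- that minimal coset representatives and their lengths are preserved, via restriction of the length function to standard parabolics and additivity of length over commuting factors --- is correct, as is your observation that the statement tacitly assumes $W_{\Gamma_{\widetilde{k}}}$ finite.
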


\begin{rmk} \label{rm:linear}
In the special case when the Coxeter graph $\Gamma$ is a subgraph of a linear graph we can 
sort the the vertices of $\Gamma$ in linear order, in such a way that for every index $k$ the hypothesis of 
Proposition \ref{p:recursion} hold. Choose such an ordering for the vertices of 
$\Gamma.$ Hence, according to Theorem \ref{t:ss}, the construction described above determines 
a spectral sequence $(E_r,d_r)$ converging to the cohomology of the Artin group $A_\Gamma.$ The 
recursion given
by Proposition \ref{p:recursion} implies that for every $i$, the $i$-th column of the $E_1$-term 
of the spectral sequence is isomorphic to the cohomology of the Artin group $A_{\Gamma'}$ for $\Gamma'$
a subgraph of $\Gamma.$
\end{rmk}

\begin{exm} \label{ex:A_4}
We keep working with a generic $\Z$-module $M$ and a representation 
$\lambda: A_\Gamma \to \Aut(M)$ as in Section \ref{ss:natural}, but we consider the special case of 
the Coxeter group $W$ of type $\A_4,$ with diagram
\begin{mucca}
\entrymodifiers={=<4pt>[o][F-]}
\begin{center}
\begin{tabular}{l}
\xymatrix @R=2pc @C=2pc {
 \ar @{-}[r]_(-0.20){s_1} &  \ar @{-}[r]_(-0.20){s_2} &  \ar @{-}[r]_(-0.20){s_3} \ar @{-}[r]_(0.80){s_4} & 
}
\end{tabular}
\end{center}\end{mucca}
and with the order on vertices given by the labelling. It is clear from the diagram that the given order satisfies the hypothesis of Proposition \ref{p:recursion}. 
Moreover we have the following isomorphisms:
$$
\mF^0\mC^*/\mF^1\mC^* = \mC^*(\A_3); \; \;\;\;\; \mF^1\mC^*/\mF^2\mC^* = \mC^*(\A_2)[1]; \; \;\;\;\;  
\mF^2\mC^*/\mF^3\mC^* = \mC^*(\A_1)[2]; $$ 
$$\mF^3\mC^*/\mF^4\mC^* = M[3]; \; \;\;\;\; \mF^4\mC^*/\mF^5\mC^* = \mF^4\mC^* = M[4]  
$$
where the index $[k]$ in square brackets means that the graduation 
of the module is shifted by $k.$
Note that in this example the Artin group $A_\Gamma$ is the braid group on $5$ strands.
According to \cite{paris} we write $\Br_i$ for the braid group on $i$ strands.
We consider the natural identification of the groups $\Br_i,$ $i <5$ as subgroups of $\Br_5$
through the diagram inclusion induced by the filtration. Hence in this case we identify
$\Br_i$ with the subgroup generated by $s_1, \ldots, s_i.$ We  keep using the notation $\lambda$
for the representation of the subgroups of $\Br_5$ induced by the inclusion.
The cohomology $H^*(N(W_{\A_4});\mL_\lambda)$ - that is the cohomology $H^*(\Br_5; M_\lambda)$ 
of the classical braid group
$\Br_5$ on $5$ strands with coefficients 
on the $\Br_5$-module $M_\lambda$ 
- can be computed 
by means of a spectral sequence with the following $E_1$-term:
 \begin{center}
 \begin{tabular}{|l}
 \xymatrix @R=1pc @C=1pc {
H^3(\Br_4; M_\lambda) & 		      & 			&   &\\
H^2(\Br_4; M_\lambda) & H^2(\Br_3; M_\lambda) & 			&   &\\
H^1(\Br_4; M_\lambda) & H^1(\Br_3; M_\lambda) & H^1(\Br_2; M_\lambda) 	&   &\\ 
H^0(\Br_4; M_\lambda) & H^0(\Br_3; M_\lambda) & H^0(\Br_2; M_\lambda) 	& M & M 
 }\\
\hline
 \end{tabular}
 \end{center}
The cohomology of the groups $\Br_i$ for $i <5$ (and actually for any $i$) can be computed recursively by means of an
analog spectral sequence.
\end{exm}

\begin{rmk}\label{rm:homology}
In the homology complex $\mC_*$ the dual filtration is given by
$$
\mF_k\mC_* := < e_T \mid \{ s_{N-k+1}, \ldots , s_N \} \varsubsetneq T>.
$$ 
With the hypothesis of Proposition \ref{p:recursion} we have that the quotient 
$\mF_{k+1}\mC_* / \mF_{k}\mC_*$ is isomorphic to the complex $\mC_*(\Gamma_{\overline{k}})[k].$
\end{rmk}

If $\mH$ is a finite central arrangement we can associate to every hyperplane $H \in \mH$ a linear functional
$l_H$ with $\ker l_H = H.$ The homogeneous polynomial $f_\mH = \prod_{H \in \mH} l_H,$ 
which is unique up to multiplication
by an invertible element, is the \emph{defining polynomial} of the arrangement 
and the set $f_\mH^{-1}(1)$ is the \emph{Milnor fiber} of the arrangement 
(see \cite{miln} for a general introduction).
If $\mH = \mH_W$ is the reflection arrangement of a Coxeter group $W,$ 
the polynomial $f^2_\mH = \prod_{H \in \mH}l_H^2$
is $W$-invariant and hence defines a \emph{weighted homogeneous} polynomial $\phi:V/W \to \C$ 
on the affine variety $V/W$ with non-isolated singularity $\phi^{-1}(0) = (\cup_{H\in \mH}H)/W.$ 
The map $\phi$ restricts to a fibration $\phi: N(W) \to \C^*$ with 
fiber $F_W = \phi^{-1}(1)$ that is called the Milnor fiber of the singularity
associated to $W.$

Let $\lambda(q)$ be the representation on the Laurent polynomial 
ring $L=R[q^\pmu]$ considered in Example \ref{ex:rank_one}. 
Let $W$ be a finite Coxeter group, with Coxeter graph $\Gamma.$ 
The fibration $\phi: N(W) \to \C^*$ induces on fundamental groups a map $\phi_\sharp:A_\Gamma \to \Z$
sending each standard generator of the Artin group to $1.$ 
Since the space $N(W)$ is aspherical, from Shapiro's Lemma (see \cite{brown}) we have that
the cohomology of the Milnor fiber $F_W$ with constant coefficients in the ring $R$ 
is isomorphic to the cohomology of $N(W)$ with coefficients in the $A_\Gamma$-module
of Laurent series $R[[q^\pmu]]$ where each standard generator of $A_\Gamma$ maps
to multiplication by $q:$
$$H^*(F_W; R) = H^*(A_\Gamma; R[[q^\pmu]]).$$
Using the recursive description of the spectral sequence for the Salvetti complex, 
in \cite{cal05} it is shown that the cohomology of the Artin group $A_\Gamma$ with coefficients in the 
representation $\lambda(q)$ is isomorphic, modulo an index shifting, to the cohomology with constant 
coefficients of the Milnor fiber $F_W.$ We can state the result as follows:
\begin{thm}[\cite{cal05}] \label{t:shifting}
Let $W$ be a finite Coxeter group and let $A$ the associated Artin group. We have:
$$
H^{*+1}(A;\mL_q) = H^*(F_W;R).
$$
\end{thm}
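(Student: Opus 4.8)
The plan is to exploit the Milnor fibration $\phi: N(W) \to \C^*$ directly. Since $\C^* \simeq S^1$ and $N(W)$ is aspherical with $N(W) \simeq X_W$ a finite complex, the fibration presents $N(W)$ as the mapping torus of the geometric monodromy $h: F_W \to F_W$. The induced map $\phi_\sharp: A \to \Z$ sends every standard generator to $1$, so pulling back along $\phi$ the rank-one system on $S^1$ whose monodromy is multiplication by $q$ reproduces exactly the representation $\lambda(q)$ of Example \ref{ex:rank_one}; hence the local system it induces on $N(W)$ is $\mL_q$, and asphericity gives $H^*(N(W); \mL_q) = H^*(A; \mL_q)$. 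The idea is then to compare total space and fiber through the Wang (mapping-torus) exact sequence, and to read off the index shift from the fact that the base is one-dimensional.

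First I would write the Wang sequence with coefficients in $\mL_q$. Modelling $N(W)$ by the algebraic mapping torus of the endomorphism induced by $h$ on $C^*(F_W;R)\otimes_R L$, with $L = R[q^\pmu]$ and $q$ acting by multiplication on the second factor, one obtains
\[
\cdots \to H^n(A;\mL_q) \to H^n(F_W)\otimes L \xrightarrow{\ 1-q\,h^*\ } H^n(F_W)\otimes L \to H^{n+1}(A;\mL_q)\to\cdots,
\]
where $h^*$ is the automorphism of $H^*(F_W;R)$ induced by the monodromy and, for a suitable orientation convention, the middle map is $1-q\,h^*$ up to a unit.

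The key point is that $h^*$ is an automorphism. In a basis the $L$-linear map $1-q\,h^*$ on the free module $H^n(F_W)\otimes L$ has matrix $\Id - qH$ with $H$ invertible over $R$; its determinant is an element of $L$ with unit constant term and unit leading coefficient, hence a non-zero-divisor, so $1-q\,h^*$ is injective. The Wang sequence thus collapses into natural isomorphisms
\[
H^{n+1}(A;\mL_q)\ \cong\ \coker\!\big(1-q\,h^*\ \text{on}\ H^n(F_W)\otimes L\big).
\]
Finally I would compute this cokernel: the relation $v\otimes 1 = (h^*v)\otimes q$ lets one eliminate every power of $q$, yielding an $R$-module isomorphism $\coker(1-q\,h^*)\cong H^n(F_W;R)$ under which $q$ acts as the inverse monodromy $(h^*)^{-1}$. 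Combining the last two displays gives $H^{n+1}(A;\mL_q)\cong H^n(F_W;R)$, the assertion of the theorem.

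The main obstacle is the careful construction of the twisted Wang sequence and the verification that its connecting map is precisely $1-q\,h^*$; once this is in place the remaining steps are linear algebra over $L$, resting on invertibility of the monodromy and on the finiteness of $H^*(F_W;R)$ (guaranteed by $X_W$ being finite). A technical caveat is the ring $R$: the cokernel computation is cleanest when $R$ is a field, so that $L$ is a PID and the cohomology modules are free, while for general $R$ one argues degree by degree or via universal coefficients. This is essentially the content of \cite{cal05}, where in place of the Wang sequence the identity is extracted from the recursive spectral sequence of Theorem \ref{t:ss} together with the identification $H^*(F_W;R)=H^*(A;R[[q^\pmu]])$ provided by Shapiro's Lemma, comparing the Laurent-polynomial and Laurent-series coefficient systems.
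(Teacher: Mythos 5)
Your argument is correct, but it takes a genuinely different route from the one the paper attributes to \cite{cal05}. There the identity is obtained inside the algebraic framework of the survey: Shapiro's Lemma identifies $H^*(F_W;R)$ with $H^*(A;R[[q^{\pm 1}]])$, and the recursive spectral sequence of Theorem \ref{t:ss} on the Salvetti complex is then used to compare the Laurent-polynomial and Laurent-series coefficient systems and to extract the degree shift. You instead work purely topologically with the Milnor fibration: since $\phi$ is weighted homogeneous the fibration is locally trivial, $N(W)$ is the mapping torus of the geometric monodromy $h$, and the twisted Wang sequence together with the injectivity of $1-q\,h^*$ collapses to $H^{n+1}(A;\mL_q)\cong\coker(1-q\,h^*)\cong H^n(F_W;R)$. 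Your approach is shorter, makes no use of the Salvetti complex, and exhibits the $q$-action on the answer as the inverse monodromy, which the spectral-sequence argument does not show as transparently; the paper's route, on the other hand, stays within the combinatorial complex and is what actually produces the explicit cocycle representatives exploited later (e.g.\ in Theorem \ref{t:dps}). One small repair: your injectivity argument via the determinant of $\Id-qH$ presupposes that $H^n(F_W;R)$ is a free $R$-module. This hypothesis is unnecessary: for any finitely generated $R$-module $M$ with automorphism $h^*$, the map $1-q\,h^*$ on $M\otimes_R R[q^{\pm 1}]$ is injective by inspecting the lowest-degree coefficient of a putative kernel element, and the same elimination of powers of $q$ computes the cokernel to be $M$. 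With that substitution your proof works over an arbitrary commutative ring $R$, with no appeal to universal coefficients or to $R$ being a field.
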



\begin{rmk}
A recursive computation applies even if $\Gamma$ is not a linear graph or 
if the order on the set of vertices is not linear.
For any subset $T$ of the set $S$ of vertices of $\Gamma$ we can define the following subcomplex of $\mC^*:$
$$\mF^{T}\mC^*:= < e_{U} \mid T \subset U \subset S>.$$
We can consider the poset 
$$\mathcal{P}:= \{(T,T') \mid T \subset T' \subset S  \}$$
with the order relation given by $(T_1,T_1') < (T_2, T_2')$ if and only if $T_1 \subset T_2,$ $T_1' \subset T_2'.$
Given a couple $(T,T') \in \mathcal{P} $ the recursive method  
described in this section allows one to compute the $E_1$-term of 
the spectral sequence for the cohomology of the quotient complex 
$\mF^{T}\mC^*/\mF^{T'}\mC^*$ by recursion on the poset $\mathcal{P}.$

%
%
\end{rmk}

In the next section we present a few examples of the application of this method and some results obtained with it.








\section{Cohomology of Artin groups: some examples} \label{s:examples}

In this section we recall some computations and examples where the methods from the previous section apply.
In some cases, like in Section \ref{ss:classical} and Section \ref{ss:rat},
the use of the spectral sequence described 
in Section \ref{s:filtration} makes computations and proof shorter.

In what follows we will use sometimes a compact notation for the generators $e_T,$ $T \subset S$ of the $L$-module
$\mC^*$ for the Coxeter system $(W,S).$ If $S$ is the ordered set $\{s_1, \ldots, s_n \}$ we will write a string 
$\epsilon_1\epsilon_2\cdots\epsilon_n,$ $\epsilon_i \in \{0,1 \}$ for a generator $e_T$ such
that $\epsilon_i = 1$ if and only if $s_i \in T.$   We will write also $0^h$ and $1^h$ instead of
$\underbrace{0 \cdots 0 }_{h \mbox{ terms}}$ and $\underbrace{1 \cdots 1 }_{h \mbox{ terms}},$ 
meaning respectively $e_{\empty}$ and $e_S.$ As an example we write $1^n$ for $e_S$ and $10^{n-1}$ 
for $e_{\{s_1\}};$ we can also use notations like $A01^2$ to denote a the terms
$e_T$ such that $s_{n-2} \notin T,$ $s_{n-1} \in T,$ $s_n \in T.$

\subsection{Homology of the braid group $\Br_n \! \mod 2$}\label{ss:classical}
In the case of the classical braid group $\Br_n$ with constant coefficients it is more simple to compute 
the homology instead of the cohomology. We state in this form the results obtained by Fuks in \cite{fuks}
and we give a somewhat simpler proof.

\begin{thm}
The homology $\oplus_n H_*(\Br_n; \Z_2)$ is isomorphic to the ring $R = \Z_2[x_0, x_1, x_2, \cdots]$ 
considered as a $\Z_2$-module. The variable
$x_i$ has homological dimension $\dim x_i = 2^i-1$ and degree $\deg x_i= 2^i$ so that
the monomial $x_{i_1}^{h_1}\cdots x_{i_l}^{h_l}$ belongs to the homology group
$H_m(\Br_n, \Z_2)$ with $n = \sum_j h_j 2^{i_j}$ and $m = \sum_j h_j (2^{i_j}-1).$
The multiplication map $\Br_{n_1} \times \Br_{n_2} \to \Br_{n_1+n_2}$ given by juxtaposing braids
induces a multiplication on $\oplus_n H_*(\Br_n; \Z_2)$ that corresponds to the standard multiplication
 in the ring $R.$
\end{thm}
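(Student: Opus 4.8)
The plan is to translate the statement into the homology of one explicit free differential graded algebra and then to compute that homology by the dyadic combinatorics of binomial coefficients, using the recursive spectral sequence of Theorem~\ref{t:ss}. First I would specialize the homology complex (\ref{e:complex_homology})--(\ref{e:delta_homology}) to the braid group $\Br_n$, that is the Artin group of type $\A_{n-1}$, with trivial $\Z_2$-coefficients. Over $\Z_2$ every sign $(-1)^{\sigma(s,T)+1}$ and $(-1)^{l(w)}$ disappears and $\lambda=\Id$, so the boundary collapses to $\partial e_T=\sum_{s\in T}[W_T:W_{T\setminus\{s\}}]\,e_{T\setminus\{s\}}$, the index being read modulo $2$. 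Writing a subset $T$ in the string notation $\epsilon_1\cdots\epsilon_{n-1}$, removing $s$ splits a maximal block of $1$'s, and the corresponding index is a product of binomial coefficients; on a full block $1^k$ one finds $[W_T:W_{T\setminus\{s_i\}}]=\binom{k+1}{i}$.

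Next I would assemble the total complex $\mC_*:=\bigoplus_{n\ge 0}\mC_*(\Br_n;\Z_2)$. The juxtaposition map $\Br_{n_1}\times\Br_{n_2}\to\Br_{n_1+n_2}$ is realized cellularly on Salvetti complexes, so on chains it is the concatenation-with-separating-zero product of strings; hence $\mC_*$ is the free associative $\Z_2$-algebra on the indecomposable generators $y_k:=1^k=e_S$ of $\mC_*(\Br_{k+1})$ for $k\ge 0$ (with $y_0$ the degree-zero class of $\Br_1$ and the unit the class of $\Br_0$), and the Leibniz rule on tensor products of complexes makes $\partial$ a derivation. A short computation then gives $\partial y_k=\sum_{i=1}^{k}\binom{k+1}{i}\,y_{i-1}\,y_{k-i}$.

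Since the ring structure asserted in the theorem is induced by exactly this product and $\mC_*$ computes $\bigoplus_n H_*(\Br_n;\Z_2)$ (the spaces $N(W)$ being aspherical), it now suffices to identify $H_*(\mC_*,\partial)$ as an algebra. By Lucas' theorem $\binom{2^i}{m}$ is odd only for $m\in\{0,2^i\}$, so $\partial y_{2^i-1}=0$; I would set $x_i:=[y_{2^i-1}]$, which has exactly the stated bidegrees, namely homological dimension $2^i-1$ and strand degree $2^i$ (with $x_0$ the class of $\Br_1$ and $x_1=[y_1]$ the generator of $H_1(\Br_2)$). I would then run the recursive spectral sequence of Theorem~\ref{t:ss}, legitimate for the linear graph $\A_{n-1}$ by Remarks~\ref{rm:linear} and~\ref{rm:homology}, inducting on $n$ so that each column of the $E_1$-page is the already-computed homology of a smaller braid group. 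The parity analysis of the coefficients $\binom{k+1}{i}$ shows that every generator $y_k$ with $k+1$ not a power of two either supports or bounds a differential, so those classes cancel in pairs, while the $x_i$ survive and generate a polynomial subalgebra. As a numerical check I would compare Euler characteristics (equivalently, signed Poincar\'e series): the chain-level value $\sum_{n\ge 1}(1-t)^{n-1}q^n=q/(1-q(1-t))$ agrees with that of $\Z_2[x_0,x_1,\dots]$ with the stated bidegrees, confirming that nothing further survives.

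The main obstacle is precisely this last homological computation: controlling all the higher differentials, equivalently proving that the sub-DGA generated by the $y_k$ with $k+1$ not a power of two is acyclic. The Euler-characteristic identity pins down dimensions but not possible extensions, so the genuine work is the inductive bookkeeping of the Lucas-type vanishing of $\binom{k+1}{i}\bmod 2$ --- ideally packaged as an explicit contracting homotopy or change of variables exhibiting those generators as a Koszul-type acyclic factor. Once this is done the surviving algebra is freely generated by the $x_i$, and because the product was identified with juxtaposition in the second step, both the additive isomorphism with $R=\Z_2[x_0,x_1,\dots]$ and the multiplicative statement follow simultaneously.
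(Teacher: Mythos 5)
Your setup coincides with the paper's: you use the mod~$2$ homology version of the Salvetti complex, observe that the boundary coefficients are binomial coefficients read modulo~$2$ (your formula $\partial y_k=\sum_{i=1}^{k}\binom{k+1}{i}y_{i-1}y_{k-i}$ is exactly the paper's $\partial (1^{n-1})=\sum_i\binom{n}{i}1^{i-1}01^{n-i-1}$), use Lucas/Kummer to see that the blocks $1^{2^i-1}$ are cycles, and then invoke the recursive spectral sequence of Theorem~\ref{t:ss} in its homological form (Remark~\ref{rm:homology}) with $E^1_{s,t}=H_t(\Br_{n-s-1};\Z_2)$. The genuine gap is the one you yourself flag: the whole argument hinges on identifying the differentials of that spectral sequence, and you only assert that ``every generator $y_k$ with $k+1$ not a power of two either supports or bounds a differential.'' This is precisely where the paper does its work. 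Using the representatives $c01^s$, it shows that $d^1$ on the $s$-th column is multiplication by $s\cdot x_0$ (hence injective for $s$ odd, zero for $s$ even), that $d^{2^i}$ is multiplication by $x_i$ exactly when $s-1\equiv 0 \bmod 2^i$ and trivial otherwise, and that consequently the $s$-th column of $E^{2^i+1}$ vanishes for $s-1\equiv 0\bmod 2^i$, $s>2^i$, and is spanned by the monomials not divisible by $x_0,\dots,x_i$ when $s-1\equiv 2^{i-1}\bmod 2^i$; the surviving columns $s=2^i-1$ then lift to the classes $ux_i$. Without this computation (or the contracting homotopy you mention but do not construct), the proof is not complete; as you concede, the Euler-characteristic check cannot substitute for it.

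Two further points. First, your proposed reformulation --- ``the sub-DGA generated by the $y_k$ with $k+1$ not a power of two is acyclic'' --- is not well posed: that set of generators is not closed under $\partial$ (for instance $\partial y_2=y_0y_1+y_1y_0$ lands on products of $y_{2^j-1}$'s), so there is no such sub-DGA, and the remaining work cannot be isolated in that form. Second, you never address why the homology is a \emph{commutative} polynomial ring even though your chain-level algebra is free associative: the paper proves that two strings differing by a transposition of adjacent blocks $1^{2^b-1}$ and $1^{2^{b'}-1}$ with $b\neq b'$ are homologous, via $\partial c=c_1+c_2$ where $c$ is obtained by merging the two blocks into $1^{2^b+2^{b'}-1}$. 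This step is needed both for the map from $R=\Z_2[x_0,x_1,\dots]$ to be well defined on monomials and for the multiplicative statement, and it is missing from your outline.
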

\begin{proof}
We consider the constant local system $\mL = \Z_2$ where 
each standard generator acts by multiplication by $1.$
Using the notation of Example \ref{ex:rank_one} we set $q = -1.$ The coefficients in the
boundary $\partial$ can be easily computed since $1 + q + \cdots + q^{n-1} = n \mod 2.$
In particular we have that the boundary for a simple element in the form $c= 1^{n-1}$ is
given ($\!\!\!\!\mod 2$) by
$$
\partial c = \sum_{i=1}^{n-1} \bin{n}{i} 1^{i-1}01^{n-i-1}.
$$
We recall that the binomial $\bin{n}{i}$ is even if and only if the integers $i$ and $n-i$ 
have no common non-zero coefficients in their expansion in base $2.$ As a special case we have that 
if $n$ is a power of $2$ then the  binomial $\bin{n}{i}$ is always even.

%
Given a monomial $u = x_{i_1}^{h_1} \cdots x_{i_l}^{h_l}$ we assume that the indexes of $u$ are ordered 
$i_1 > i_2 > \cdots > i_l$ and we associate to $u$ the following generator 
in the Salvetti complex $\mC_*$ for $\Br_n$:
$$
\underbrace{1^{2^{i_1}-1}0\cdots01^{2^{i_1}-1}}_{h_1 \mbox{ terms}} 0 \cdots 0 
\underbrace{1^{2^{i_l}-1}0\cdots01^{2^{i_l}-1}}_{h_l \mbox{ terms}}.
$$
From the description of the boundary map 
$\partial$  it follows that for any generator of $\mC_*$ in the form
$$
c = 1^{2^{a_1}-1}0\cdots01^{2^{a_l}-1}
$$
we have that $\partial c=0$ and then in particular all 
the generators associated to monomials in $R$ are cycles.

Moreover given two generators
$$
c_1 = 1^{2^{a_1}-1}0\cdots01^{2^{a_l}-1}01^{2^b-1}01^{2^{b'}-1}01^{2^{a'_1}-1}0\cdots01^{2^{a'_{l'}}-1}
$$
and
$$
c_2 = 1^{2^{a_1}-1}0\cdots01^{2^{a_l}-1}01^{2^{b'}-1}01^{2^{b}-1}01^{2^{a'_1}-1}0\cdots01^{2^{a'_{l'}}-1}
$$
we can set
$$
c = 1^{2^{a_1}-1}0\cdots01^{2^{a_l}-1}01^{2^b+2^{b'}-1}01^{2^{a'_1}-1}0\cdots01^{2^{a'_{l'}}-1}
$$
and for $b \neq b'$  we have $\partial c = c_1 + c_2.$ Hence the two cycles $c_1$ and $c_2$ are co-homologous.

We assume the inductive hypothesis that for any $k < n$ the 
cycles corresponding to the monomials 
with total degree $k$ generate the homology
group $H_*(\Br_k; \Z_2).$ Using the filtration given in Remark \ref{rm:homology}
we can define the spectral sequence for the homology of $\Br_n$ 
analogous to the cohomology spectral sequence
constructed in Theorem \ref{t:ss}.

The $E^1$-term is given by $E^1_{s,t}= H^t(\Br_{n-s-1}; \Z_2).$ By induction the $s$-th 
column of the $E^1$-term of the spectral sequence is generated by the 
monomials in $R$ with degree $n-s-1.$ If the string $c$ is the cycle associated to the 
monomial $u \in R,$ the representative in $\mC_*$ of a monomial $u$ in the $s$-th column
of the $E^1$-term is given by the string $c01^s.$

The differential $d^1_{s,t}: E^1_{s,t} \to E^1_{s-1,t}$ acts on $c01^s$ by mapping 
$d^1: c01^s \to s \cdot c001^{s-1},$ that is the representative 
of the monomial $s\cdot ux_0 \mod 2.$ This means 
that $d^1_{s,t}$ it is given by multiplication by $sx_0$ and 
hence it is trivial if and only if $s$ is even, while it is injective for odd $s.$ 
It follows from the inductive hypothesis on the description of the groups
$H_*(\Br_k; \Z_2)$ for $k<n$ that for $s$ even we have $E^2_{s,t} =0$  and
for $s$ odd $E^2_{s,t}$ is generated by all the monomials with degree $n-s-1$ and dimension $s$ that are not 
divided by $x_0.$

The differential $d^2_{s,t}: E^2_{s,t} \to E^2_{s-2,t+1}$ is given by multiplication 
by $x_1$ if $s-1 \equiv 0 \mod 4$ and is trivial otherwise. 
The $s$-th column of the $E^3$-term of the spectral sequence is trivial 
if $s-1 \equiv 0 \mod 4,$ $s > 1$ and is generated by monomials 
that are not divided by $x_0$ and $x_1$ if $s-1 \equiv 2 \mod 4.$

In general the description of the differential, and as a consequence 
the description of the spectral sequence, is
the following. 
The differential $d^{2^i}_{s,t}: E^{2^i}_{s,t} \to E^{2^i}_{s-{2^i},t+2^i-1}$ 
is given by multiplication by $x_i$ if $s-1 \equiv 0 \mod 2^i$ and is trivial otherwise. 
The $s$-th column $E^{2^i+1}$-term of the spectral sequence is trivial 
if $s-1 \equiv 0 \mod 2^i,$ $s > 2^i$ and is generated by monomials 
that are not divided by $x_0, x_1, \ldots, x_i$ if $s-1 \equiv 2^{i-1} \mod 2^i.$ 
All the other differentials are trivial.

In the $E^\infty$-term of the spectral sequence we have,  in the  $0$-th column, the monomials
$u$ with degree $n-1.$ Those lift to monomials $ux_0$ in the homology of $\Br_n.$
In general in the $(2^i-1)$-th column we have the monomials with degree $n - 2^i$ that are not divided
by the terms $x_0, \ldots, x_{i-1}.$ A monomial $u$ in the  $(2^i-1)$-th column lifts to the monomial $ux_i$
in the homology of $\Br_n.$

%
%
The multiplication map $\Br_{n_1} \times \Br_{n_2} \to \Br_{n_1+n_2}$ given by juxtaposing braids
is induced by the inclusion of the Coxeter graph $\Gamma_{A_{n_1-1}}$ for $W_{A_{n_1-1}}$ and 
$\Gamma_{A_{n_2-1}}$ for $W_{A_{n_2-1}}$ in the graph $\Gamma_{A_{n_1+n_2-1}}$ for $W_{A_{n_1+n_2-1}}$
as graphs of commuting parabolic subgroups. The map sends the vertices of $\Gamma_{A_{n_1-1}}$ to
the first $n_1-1$ vertices of $\Gamma_{A_{n_1+n_2-1}}$ and the vertices of $\Gamma_{A_{n_2-1}}$ to 
the last $n_2-1$ preserving the ordering.
The induced map on the Salvetti complex is given by mapping the couple of strings $(A,B)$ to the string $A0B$
and hence the induced multiplication in homology maps the couple of monomials $(u,v)$ to the product $uv.$
\end{proof}
\subsection{Rational cohomology of the Milnor fiber}\label{ss:rat}
In this example we show how to compute the rational cohomology of the classical braid group $\Br_n$ 
with coefficients in the representation $\lambda(q)$ already described in Example \ref{ex:rank_one}. 
The result presented here has been computed in \cite{fren} and \cite{mar} and independently in \cite{dps}. 

Let $R:= \Q$ be the field of rational numbers and let $\mL_q$ be the local system constructed in
Example \ref{ex:rank_one}. The local system is induced by the action of the braid group on 
the Laurent polynomial ring $L:= \Q[q^\pmu].$ Each standard generator maps to multiplication
by $(-q).$ The choice of this action is clearly equivalent to the action given by each standard 
generator mapping to multiplication by $q$, as in Example \ref{ex:rank_one}. Although we prefer the choice of $(-q)$,
in coherence with \cite{dps, cal05, cal06} and others, in order to get slightly simpler formulas, 
as the reader can see in the following paragraphs.

As showed in Section \ref{ss:recursion}, this local system has an interesting geometric 
interpretation in terms of the cohomology of the Milnor fiber of
the discriminant singularity of type $\A_{n-1}$ (see also \cite{cal05, cal06} for the analog computation
for homology with integer coefficients). 
%
%
From an algebraic point of view, the computation gives, modulo an index shifting, the rational cohomology of the 
kernel of the abelianization map $\Br_n \to \Z,$ that is the commutator subgroup $\Br_n'$ of the braid group
on $n$ strands. 
In fact it is easy to see that the Milnor fiber of type $\A_{n-1}$ is a classifying space for $\Br_n'$ and using Theorem \ref{t:shifting} we get:
$$
H^{*+1}(\Br_n;\mL_q) = H^*(\Br_n';\Q).
$$

Let $\varphi_n(q)$ be the $n$-th cyclotomic polynomial. We introduce the notation 
$\mathbf{n}:= \Q[q]/(\varphi_n(q)).$ In the following paragraphs we will also use the notation
$[n]:= 1 + q + \cdots + q^{n-1}= \frac{q^n-1}{q-1}.$ 

For any positive integer $n$ we linearly order the vertices of the graph $\Gamma_{n}$ of type $\A_n,$ that is the 
graph for the Artin group $\Br_{n+1}.$ Let $\mC_n^*$ be the complex associate to $\Gamma_n.$ 
Recall that the Coxeter group $W_{\A_n}$ has exponents $1,\ldots, n$ and hence 
$W_{\A_n}(q) = [n+1]! :=\prod_{i=1}^{n+1} [i].$
From Example \ref{ex:rank_one} we can describe more explicitly the coboundary $\delta$ in $\mC^*_n.$
Let $e_T$ be a generator of $\mC^*_n$ in the form $A01^a01^b0B$ and let $e_{T'}$ be the generator
$A01^{a+b+1}0B.$ 
We need the following simple remark: if $W_S$ is a Coxeter group generated by a set of generator $S$ that
is the disjoint union $S= T_1 \cup T_2$ of two commuting set of generators, then 
we can decompose $W_S = W_{T_1} \times W_{T_2}$ and we have a factorization 
$W_S(q) = W_{T_1}(q) \times W_{T_2}(q)$ for the Poincar\'e series for $W.$
Applying this to the computation of $\delta e_T$ we have that the coefficient for $e_{T'}$ in the 
coboundary is given by the sign coefficient $(-1)^{a+|A|}$ times the $q$-analog binomial
$$
\frac{W_{\A_{a+b+1}}(q)}{W_{\A_a}(q)W_{\A_b}(q)} = \frac{[a+b+2]!}{[a+1]![b+1]!} := \qbin{a+b+2}{a+1}.
$$

As in \cite{dps} we define the following elements:
\begin{eqnarray*}
w_h & := & 01^{h-2}0\\
z_r & := & 1^{h-1}0 + (-1)^h 01^{h-1}\\
b_h & := & 01^{h-2}\\
c_h & := & 1^{h-1}\\
z_h(i) & := & \sum_{j=0}^{j=i-1} (-1)^{hj}w_h^j z_h w_h^{i-j-1}\\
v_h(i) & := & \sum_{j=0}^{j=i-2} (-1)^{hj}w_h^j z_h w_h^{i-j-2} b_h + (-1)^{h(i-1)} w_h^{i-1} c_h.
\end{eqnarray*}
We remark that the elements  $z_h(i)$ and $v_h(i)$ are cocycles.

Our aim is to prove the following result:


\begin{thm} [\cite{dps}] \label{t:dps}

\begin{eqnarray*}
H^{n-2i+1}(\Br_{n+1}; \mL_q) & = &
\left\{
\begin{array}{cl}
0 & \mbox{if } h:=\frac{n}{i} \mbox{ is not an integer} \\
\q{h} & \mbox{generated by } [z_h(i)] \mbox{ if } h:= \frac{n}{i} \mbox{ is an integer}
\end{array}
\right. \\
H^{n-2(i-1)}(\Br_{n+1}; \mL_q) & = &
\left\{
\begin{array}{cl}
0 & \mbox{if } h:=\frac{n+1}{i} \mbox{ is not an integer} \\
\q{h} & \mbox{generated by } [v_h(i)] \mbox{ if } h:=\frac{n+1}{i} \mbox{ is an integer.}
\end{array}
\right.
\end{eqnarray*}
\end{thm}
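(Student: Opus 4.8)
The plan is to run the recursive spectral sequence of Theorem~\ref{t:ss} on the explicit complex $\mC^*_n$ for $\Br_{n+1}$, inducting on the number of strands and isolating the cohomology one cyclotomic factor at a time. The combinatorial input is this: since $L=\Q[q^\pmu]$ is a principal ideal domain, each $H^k(\Br_{n+1};\mL_q)$ is a finitely generated torsion $L$-module, hence a finite direct sum of cyclic modules $\Q[q]/(\varphi_d(q)^m)$, and the assertion is that only single copies of $\q{h}=\Q[q]/(\varphi_h(q))$ occur and only in the two prescribed families of degrees. Everything is controlled by two arithmetic facts about the merge coefficients $\qbin{a+b+2}{a+1}$: the polynomial $[m]=\prod_{d\mid m,\,d>1}\varphi_d(q)$ is squarefree, so $\varphi_h$ divides $[m]$ with multiplicity at most one and does so exactly when $h\mid m$; and, more generally, the $\varphi_h$-adic valuation of a $q$-binomial equals the number of carries in the base-$h$ addition underlying it (the $q$-analogue of Kummer's theorem). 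I would therefore fix $h\ge 2$ and localise the whole filtered complex at the prime $\varphi_h$, so that the coefficients live in the discrete valuation ring $L_{(\varphi_h)}$ with uniformiser $\varphi_h$ and the $\varphi_h$-primary part of the answer can be read off independently for each $h$.

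By Remark~\ref{rm:linear} the linear ordering of the vertices of $\A_n$ meets the hypotheses of Proposition~\ref{p:recursion} for every $k$, so Theorem~\ref{t:ss} furnishes a first-quadrant spectral sequence converging to $H^*(\Br_{n+1};\mL_q)$ whose $i$-th column of the $E_1$-page is $H^*(\Br_{n-i};\mL_q)$, placed in total degree shifted by $i$. By the inductive hypothesis the leftmost columns are therefore already known as sums of cyclotomic modules $\q d$, while the two rightmost columns are free of rank one over $L$, arising from the trivial group $\Br_1$ and from the top piece $\mathcal{F}^N\mC^*$. Exactly as in the mod~$2$ computation of Section~\ref{ss:classical}, where the differentials are multiplications by the classes $x_i$, the connecting differentials here are, up to sign and up to units of $L_{(\varphi_h)}$, multiplications by the $q$-binomial coefficients coming from the merge formula, while the higher $d_r$ are obtained by the cochain-representative chase of Section~\ref{s:filtration}. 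The guiding model is $\Br_3$, where the single differential $d_1$ is multiplication by $[3]=\varphi_3$ and already produces $H^1=\q2$ and $H^2=\q3$.

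I would then compute $E_\infty$ in this localised picture. Over the discrete valuation ring $L_{(\varphi_h)}$ each differential is either a unit, in which case it cancels the pair of entries it joins, or is divisible by $\varphi_h$; by squarefreeness a surviving $\varphi_h$-divisible differential has valuation exactly one, so each time it acts it kills its source and deposits a single torsion module $\q h$ in its target. A differential between entries that are already $\varphi_d$- and $\varphi_{d'}$-torsion with $d\neq d'$ must vanish, since $\Hom(\q d,\q{d'})=0$; this coprimality both collapses the spectral sequence after the relevant page and guarantees that no module $\Q[q]/(\varphi_h^2)$ is ever manufactured. Following the recursion and tracking the filtration levels at which a valuation-one differential survives then yields precisely the stated two families: the condition $h\mid n$ forces a surviving class in degree $n-2i+1$ with $i=n/h$, and the condition $h\mid n+1$ forces one in degree $n-2(i-1)$ with $i=(n+1)/h$.

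Finally I would match the abstract $E_\infty$-generators with the explicit cocycles. Using the merge formula together with $\qbin{h}{1}=[h]\equiv0\pmod{\varphi_h}$ one checks that $z_h(i)$ and $v_h(i)$ are cocycles, the building blocks $z_h$, $b_h$, $c_h$ being arranged so that their internal coboundaries telescope against the factors $w_h=01^{h-2}0$; reducing modulo $\varphi_h$ one then verifies that each represents a class annihilated exactly by $\varphi_h$ and sitting in the filtration level selected above, whence it must generate the corresponding $\q h$. I expect the principal obstacle to be exactly this bookkeeping: organising the $q$-Kummer valuations so as to prove that for each $h$ precisely one valuation-one differential survives in each predicted bidegree, that all longer differentials vanish by coprimality, and that the written cocycles $z_h(i),v_h(i)$ are genuinely the images of the surviving generators rather than merely nonzero classes. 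Once these valuation estimates are secured, the remaining ingredients—the spectral sequence, the reduction to a $\varphi_h$-local computation, and the inductive identification of the $E_1$-page—are formal.
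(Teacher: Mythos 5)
Your plan follows essentially the same route as the paper's proof: induction on $n$, the recursive spectral sequence of Theorem~\ref{t:ss} whose $E_1$-columns are the cohomology groups of smaller braid groups, separation of the computation one cyclotomic factor $\varphi_h$ at a time (your localisation at $(\varphi_h)$ is equivalent to the paper's use of the decomposition $L/([n+1])=\bigoplus_{h\mid n+1}\q{h}$ together with the observation that any map $\q{h}\to\q{k}$ vanishes for $h\neq k$), identification of the differentials with multiplication by $q$-analogues, and finally lifting the surviving $E_\infty$-classes to the explicit cocycles $z_h(i)$ and $v_h(i)$.

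Two concrete points, however. First, your stated arithmetic input is wrong: the $\varphi_h$-adic valuation of $\qbin{m}{k}$ equals $\lfloor m/h\rfloor-\lfloor k/h\rfloor-\lfloor(m-k)/h\rfloor\in\{0,1\}$, i.e.\ it detects a carry in the \emph{units} digit of the base-$h$ addition only, not the total number of carries (for instance $\varphi_2\nmid\qbin{4}{2}=(1+q^2)(1+q+q^2)$ even though $2+2$ produces a carry in base $2$); the corrected statement does suffice, and the paper instead invokes Lemma~\ref{l:dss} precisely to show that the final differential, multiplication by $\qbin{n+1}{h}$, is injective on the $\q{h}$-summand when $h\mid n+1$. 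Second, and more substantively, the ``bookkeeping'' you defer is not a routine valuation count on $d_1$: in the case $h\mid n+1$ every $d_1$ between consecutive $\q{h}$-entries is multiplication by $[\lambda h]\equiv 0 \bmod \varphi_h$, so nothing is decided on page $1$ and one must compute $d_{h-1}$; this requires choosing cochain representatives as in Section~\ref{s:filtration} and the explicit identity $v_h(i)01^l=z_h(i-1)01^{h-2}01^l+(-1)^{h(i-1)}w_h^{i-1}1^{h-1}01^l$ to see that $d_{h-1}$ acts as multiplication by $[\lambda h+1]\cdots[\lambda h+h-1]$, a unit mod $\varphi_h$. Relatedly, your picture in which a valuation-one differential ``kills its source and deposits $\q{h}$ in its target'' does not apply between two copies of $\q{h}$ (such a map is simply zero); it only describes the corner map $L\to L$ given by $[n+1]$. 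So the skeleton is right and matches the paper, but the step you flag as bookkeeping is exactly where the proof lives, and it requires the higher-page analysis and the cocycle identities, not just $d_1$ valuations.
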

\begin{proof}
We can prove the Theorem by induction on $n.$ 
We consider 
the natural graph inclusion $\Gamma_n \into \Gamma_{n+1}$ and group inclusion $\Br_n \into \Br_{n+1}$ 
induced by the filtration $\mF.$ As in Example \ref{ex:A_4} we recall that the $E_1-$term of the spectral
sequence for $\Br_{n+1}$ is given by  
$$ E^{s,t}_1 := H^{s+t}(\mF^s\mC^*_n/\mF^{s+1}\mC^*_n) = H^t (\mC_{n-s-1})$$
where we can define the complexes $\mC_0^* = \mC_{-1}^*:= L$ concentrated in dimension $0$ and hence 
$H^*(\mC_0^*) = H^0(\mC_0) = H^*(\mC_{-1}) = H^0(\mC_{-1}) = L.$

The statement of the theorem is trivially true for $n=1.$ 
Assume $n>1$ and suppose that the theorem holds for any integer $m,$ $m<n.$ 
Each non-trivial entry $E_1^{s,t}$ of the $E_1$ term of the spectral sequence for $\mC_n^*$ is 
isomorphic either to a $L$-module of the form $\q{h},$ for a suitable $h,$ or to the ring $L$ itself. 
The second case holds only for the entries $E^{0,n-1}_1$ and $E^{0,n}_1.$

The cyclotomic polynomials $\ph_h(q)$ are prime polynomials in the ring $L.$
As a consequence any map $d:\q{h} \rightarrow \q{k}$ induced by a differential $d_k$ of
of the spectral sequence can be non-zero only if $h = k$ and the map is an isomorphism.
In a similar way any map $d:\q{h} \rightarrow L/([n+1])$ can be non-zero only if 
$h \mid n+1$ and if $h \nmid n+1$ any map from $\q{h}$ to any quotient of  $L/([n+1])$ is trivial. This follows since 
$[n+1]$ is the product of the cyclotomic polynomials $\ph_h(q)$ for $h \mid n+1$ and
hence the $L$-module $L/([n+1])$ decomposes as a direct sum of modules $$L/([n+1]) = \bigoplus_{h|n+1}\q{h}.$$

Since the $L$-module $E^{n-1,0}_1 = L$ is generated by $01^{n-1}$ and $E^{n,0}_1 = L$
is generated by $1^n,$ from Example \ref{ex:rank_one} we can see that the differential 
$d_1: E^{n-1,0}_1 \rightarrow E^{n,0}_1$ is given by multiplication by $[n+1]$ and hence 
we have $E^{n-1,0}_2= 0$ and $E^{n,0}_2 = R/([n+1]).$ 

As a consequence if we fix a certain integer $h$ we can study the spectral
sequence considering only the 
terms isomorphic to $\q{h}$ and, if $h \mid n+1,$
the summand of $R/([n+1])$ isomorphic to $\q{h},$ while we can ignore all
the other summand in the spectral sequence.
We have three different cases:

\textit{i)}
$h \mid n$ \\
By induction we know that $E^{s,t}_1 = H^t(\mC^*_{n-s-1}) = \q{h}$ only in two cases:
\begin{itemize}
\item[\textit{i.a)}] $h \mid n-s-1$ and $ t = n-s-1-2 \frac{n}{h}+1;$
\item[\textit{i.b)}] $h \mid n-s$
and $t=n-s-1-2(\frac{n}{h} - 1 ).$ 
\end{itemize}
If we set $i := \frac{n}{h},$ in case \textit{i.a)} we have
\begin{eqnarray*}
\lambda = 1, \ldots, i-1 & E_1^{\lambda h-1,n-\lambda(h-2) -2i+1} &
\mbox{generated by } z_h(i-\lambda)01^{\lambda h-1}\\
\mbox{and in case in case \textit{i.b)} we have} & & \\
\lambda = 0, \ldots, i-1 & E_1^{\lambda h  ,n-\lambda(h-2) -2i+1} &
\mbox{generated by } v_h(i-\lambda)01^{\lambda h}.
\end{eqnarray*}
We note that 
$z_h(i)01^l = v_h(i)001^l - (-1)^{h(i-1)} w_h(i-1) 1^{h-1}01^l,$  hence we get
that the map
$d_1:E_1^{\lambda h-1,n-\lambda(h-2) -2i+1} \rightarrow E_1^{\lambda h,n-\lambda(h-2) -2i+1}$
is given by multiplication by
$[\lambda h + 1],$ so it is an isomorphism. It follows (see diagram below) that
the $L$-module $E_1^{0,n-21+1}$ is the only one that survives in the term 
$E_{\infty}$ and hence $E_1^{0,n-21+1}$ will give, as we will see next, 
the only contribution from $E_{\infty}$ to the cohomology group $H^{n-2i + 1}(\mC_n^*).$

\begin{center}
\begin{tabular}{|l}
\xymatrix @R=1pc @C=1pc {
\q{h} & & &            &       & & &                    &        & & &                   &\\
& & & \q{h}\ar[r]^\sim & \q{h} & & &                    &        & & &                   &\\
& & &                  &       & & & \cdots \ar[r]^\sim & \cdots & & &                   &  \\
& & &                  &       & & &                    &        & & & \q{h} \ar[r]^\sim & \q{h} \\
& & &                  &       & & &                    &        & & &                   & }\\
\hline
\end{tabular}
\end{center}
In order to consider the case \textit{ii)} we need the following lemma:
\begin{lem}[\cite{dss}] \label{l:dss} Let $1 = d_1 <  ...< d_n$ be the divisors of $n,$ in the ring $L$ we have the following 
equality of ideals:
$$
\left( \left[ \begin{array}{c}n\\d_1 \end{array} \right] ,...,\left[ \begin{array}{c}n\\d_k \end{array} \right] \right)=
(\varphi_{d_{k+1}}\cdots\varphi_n).
$$
\end{lem}

\textit{ii)}
$h \mid n+1$\\
Now we set $i := \frac{n+1}{h}.$ The two possible cases for $E^{s,t}_1 = \q{h}$ are 
the following ones:
\begin{eqnarray*}
\lambda = 1, \ldots, i-1 & E_1^{\lambda h-2,n-\lambda(h-2) -2i+2} &
\mbox{generated by } z_h(i-\lambda)01^{\lambda h-2}\\
\lambda = 1, \ldots, i-1 & E_1^{\lambda h-1,n-\lambda(h-2) -2i+2} &
\mbox{generated by } v_h(i-\lambda)01^{\lambda h-1}.
\end{eqnarray*}
The differential 
$d_1:E_1^{\lambda h-2,n-\lambda(h-2) -2i+2} \rightarrow E_1^{\lambda h-1,n-\lambda(h-2) -2i+2}$
is multiplication by the $q$-analog $[\lambda h]$ and hence it is the trivial map. 
The next differential that we need to consider is $$d_{h-1}:E_{h-1}^{\lambda h-1,n-\lambda(h-2) -2i+2}
\rightarrow E_{h-1}^{(\lambda+1) h-2,n-(\lambda+1)(h-2) -2i+2}.$$ 
The equality 
$v_h(i)01^l=z_h(i-1)01^{h-2}01^l+(-1)^{h(i-1)}w_h^{i-1}1^{h-1}01^l$ implies that
the map above corresponds to multiplication by
$[\lambda h+1]\ldots[\lambda h+h-1]$ and hence it is an isomorphism, since all the factors
are invertible in $\q{h}.$ Finally the map
$d_{h-1}:E_{h-1}^{n-h,h-1} \rightarrow E_{h-1}^{n,0}$ corresponds
to multiplication by
$\alpha_h = \left[ \begin{array}{c}n+1\\h \end{array} \right]$ and hence from Lemma \ref{l:dss} 
it is injective. Below we have a picture of the spectral sequence, with differentials $d_1$ and $d_{h-1}.$
We can see the there is only one nontrivial $\q{h}$-module that survives in
$E_{\infty},$ that is $E_{h-1}^{h-2,n-h+2-2(i-1)},$ that gives a contribution (actually the only one) 
to the cohomology group $H^{n-2(i-1)}(C_n).$
\begin{center}
\begin{tabular}{|l}
\xymatrix @R=1pc @C=1pc {
& & \q{h}\ar[r]^0 & \q{h}\ar[rrrd]^\sim& & &        &        &                       & &                &                           & & & \\
& &               &                    & & &\cdots \ar[r]^0 & \cdots \ar[rrrd]^\sim & &  &              &                           & & &\\
& &               &                    & & &        &        &                       & & \q{h} \ar[r]^0 & \q{h} \ar[rrrd]^{\alpha_h}& & &\\
& &               &                    & & &        &        &                       & &                &                           & & & R/I}\\
\hline
\end{tabular}
\end{center}
%

\textit{iii)}
$h \nmid n(n+1)$\\
Let $c, 1 < c < h$ be an integer such that $h\mid n+c,$ if we set $i := \frac{n+c}{h},$ 
we have again two cases for $E^{s,t}_1 = \q{h}$:
\begin{eqnarray*}
\lambda = 1, \ldots, i -1 & E_1^{\lambda h-c-1,n+c-\lambda(h-2) -2i+1}
& \mbox{generated by } z_h(i-\lambda)01^{\lambda h-c-1}\\
\lambda = 1, \ldots, i -1 & E_1^{\lambda h-c  ,n+c-\lambda(h-2) -2i+1}
& \mbox{generated by } v_h(i-\lambda)01^{\lambda h-c}.
\end{eqnarray*}
The differential 
$d_1:E_1^{\lambda h-c-1,n+c-\lambda(h-2) -2i+1}\rightarrow E_1^{\lambda h-c  ,n+c-\lambda(h-2) -2i+1}$
corresponds to multiplication by $[\lambda h -c +1]$ that is co-prime with $[h]$ and hence the map is 
an isomorphism. It follows that none of the modules survives in $E_2$ and 
hence the contribution to $E_{\infty}$ is trivial.

From Lemma \ref{l:dss} and from the previous observations in case \textit{ii)} we get that
$E_{\infty}^{n,0} =\q{n+1},$ generated by $1^n.$
From the description of the spectral sequence it follows that the cohomology
group $H^*(\mC_n^*)$ is the one described in the statement of the theorem.
In order to complete the proof we need to check that the generators are correct.
In case \textit{i)} the $L$-module $E_1^{0,n-21+1}$ is generated by $v_h(i)0$ that differs from $z_h(i)$
by a term of the form $A1$ and hence we can lift $v_h(i)0$ to $z_h(i).$
The case \textit{ii)} is analog: the $L$-module $E_{h-1}^{h-2,n-h+2-2(i-1)}$ is generated by
$z_h(i-1)01^{h-2}$ that differs from $v_h(i)$ by a term of the form $A1^{h-1}$ and hence
we can lift $z_h(i-1)01^{h-2}$ to $v_h(i).$
\end{proof}

\subsection{Artin group of affine type and non-linear Coxeter graphs: some remarks} \label{ss:affine}
We deal now with the case of an affine reflection group. 
Let $(W, S)$ be an affine reflection group with Coxeter graph $\Gamma$ and suppose $\mid \! S \! \mid =n+1.$ 
Let $\lambda$ be an abelian representation of the Artin group $A_\Gamma$ over a ring $R$ that is 
an unique factorization domain.
The generators of the Salvetti complex $(\mC^*, \delta)$ are in $1$ to $1$ correspondence
with the proper subsets of $S.$ It can be somewhat convenient to complete the complex
$\mC^*$ to an \emph{augmented} Salvetti Complex $\widehat{\mC}^*$ as follows:
$$
\widehat{\mC}^* := \mC^* \oplus R.e_S.
$$
We can define the coboundary $\widehat{\delta}$ on the complex 
$\widehat{\mC}^*$ setting 
$\widehat{\delta}(e_T) = \delta(e_T)$ if $\mid \! T \! \mid <n$ and
re-defining the coboundary on the 
top-dimensional generators of $\mC^*.$
We formally define a suitable \emph{quasi}-Poincar\'e polynomial for $W$ by:
$$
(\widehat{W}_S)_\lambda := \mathrm{lcm}\{(W_{S \setminus \{s\}})_\lambda \mid s \in S\}.
$$
and for every $s \in S$ we set the coboundary for $\widehat{\mC}^*$:
$$
\delta(e_{S \setminus \{ s\}}):= (-1)^{\sigma(s, S\setminus \{ s\})+1} 
\frac{(\widehat{W}_S)_\lambda}{(W_{S \setminus \{s\}})_\lambda}.
$$
 and it is straightforward to verify that $\widehat{\mC}^*$ is still a chain complex. Moreover, we have
the following relations between the cohomology of $\mC^*$ and $\widehat{\mC}^*$:
$$ H^i(\mC^*) = H^i (\widehat{\mC}^*) $$
for $i \neq n, n + 1$ and we have the short exact sequence
$$ 0 \to H^n (\widehat{\mC}^*) \to H^n (\mC^*) \to R \to 0.$$
An example of this construction can be found in the computation of the cohomology
of the affine Artin group of type $\widetilde{\mathbb{B}}_n$ in \cite{cms10}.

\subsection{A non-abelian case: three strands braid group and a geometric representation} \label{ss:nonab}
The third braid group $\Br_3$ and the special linear 
group $SL_2(\Z)$ have a classical geometric representation
given by symmetric power of the natural symplectic representation. The cohomology of this representation
is studied in detail in \cite{ccs12}. The aim of this section is to show how the Salvetti complex can be used 
for finite computations, even with non-abelian representation.

In general we can consider an orientable
surface $M_{g,n}$ of genus $g$ with $n$ connected components in its boundary and the isotopy
classes of Dehn twists around simple loops $c_1, \ldots, c_{2g}$ such that $\mid \! \! c_i \cap c_{i+1} \! \! \mid \; = 1$
and $\mid \! \! c_i \cap c_j \! \! \mid \; = 0$ if $j \neq i \pm 1.$ We give a representation of the braid group
in the symplectic group $Aut(H^*(M_{g,n};\Z); < >)$ of all automorphisms preserving the intersection form 
as follows: the $i$-th generator of the braid group
$\Br_{2g+1}$ maps to the Dehn twist with respect to the simple loop $c_i.$  
In the case $g=1,$ $n=1$ the symplectic group equals $SL_2(\Z).$ We extend this representation
to a representation $\lambda$ on the symmetric algebra $M=\Z[x,y].$ 
This representation splits into irreducible $SL_2(\Z)$-modules $M = \oplus_{n \geq 0}M_n$ 
according to the polynomial degree. In \cite{ccs12} the cohomology groups $H^*(\Br_3; M)$ and $H^*(SL_2(\Z); M)$
are computed. The main ingredients for the achievement of this result are 
the computation of the spectral sequence associated
to the central extension $$1 \to \Z \to \Br_3 \to SL_2(\Z) \to 1$$
(see \cite[Th. 10.5]{milnor71}), the amalgamated free product decomposition 
$$SL_2(\Z) = \Z_4 \ast_{\Z_2} \Z_6$$ (see \cite{mks}) and a generalization of 
a classical result of Dickson (see \cite{dickson, steinberg}) on the characterization 
of $SL_2(\F_p)$-invariants polynomials.

The methods described in this survey don't seem very useful to compute explicitly 
the cohomology group $H^*(\Br_3; M),$ 
but they can be used to get finite computations with the help of a computer.
In particular, for a fixed degree $n$ the computation of the cohomology group $H^*(\Br_3; M_n)$
is a very simple problem. 

Let $\sigma_1$ and $\sigma_2$ be the standard generators of the braid group $\Br_3.$ The action of the
representation $\lambda$ on degree-one polynomials is given by
 $$\sigma_1:\left\{ \begin{array}{l} x\to x-y\\ y \to y\end{array}\right. ,\  
\sigma_2:\left\{ \begin{array}{l} x\to x\\ y \to x+ y\end{array} \right. $$
and hence, with respect to the basis $\{x,y\}$ of $M_1,$ the representation is given by the matrices
$$
\sigma_1\stackrel{\lambda}{\mapsto} \begin{bmatrix} 1&0\\-1&1\end{bmatrix},\quad  
\sigma_2\stackrel{\lambda}{\mapsto} \begin{bmatrix} 1& 1\\ 0& 1\end{bmatrix}. $$ 
The action extends to the $n$-th symmetric algebra of the space $<x,y>,$ with basis
$\{ x^n, x^{n-1}y, \ldots, y^n\},$ by the matrices
$$ 
\sigma_1\stackrel{\lambda}{\mapsto} \begin{bmatrix} 
\binom{n}{0}& 0 & 0 & \cdots &  0\\
-\binom{n}{1} & \binom{n-1}{0} & 0 & \ddots & 0 \\
\binom{n}{2} & -\binom{n-1}{1} & \binom{n-2}{0} & \ddots & 0\\
\vdots  & \vdots & \ddots & \ddots & 0 \\
(\!-\!1\!)^n \binom{n}{n} & (\!-\!1\!)^{n\!-\!1} \binom{n-1}{n-1} & \cdots & -\binom{1}{1} & \binom{0}{0}
\end{bmatrix}, \quad  
\sigma_2\stackrel{\lambda}{\mapsto} \begin{bmatrix} 
\binom{0}{0} & \binom{1}{0} & \cdots & \binom{n -  1}{\;\;\,0} & \binom{n}{0}\\ 
0 & \binom{1}{1} & \ddots &  \binom{n-1}{1}& \binom{n}{1} \\
0 & 0 & \ddots & \ddots & \vdots\\ 
\vdots & \ddots & \ddots & \binom{n  -  1}{n  - 1} & \binom{n}{n  -  1} \\
0 & 0 & \cdots & 0 & \binom{n}{n}
\end{bmatrix}
$$ 
that is, $(\lambda(\sigma_1))_{ij}= (-1)^{i-j}\binom{n+1-j}{i-j}$ and $(\lambda(\sigma_2))_{ij}= \binom{j-1}{i-1},$ 
where $\binom{h}{k} = 0$ if $k<0.$
Hence we have to compute the cohomology for the complex $\mC^*$ given by:
\begin{center}
\begin{tabular}{l}
\xymatrix @R=1pc @C=1pc {
& *+<3pt>[F]{00} & \\
*+<3pt>[F]{10} \ar[ur]^{\sigma_1\sigma_2 - \sigma_2 + \Id} & & *+<3pt>[F]{01} \ar[ul]_{-\sigma_2\sigma_1 + \sigma_1 - \Id}\\
& *+<3pt>[F]{00} \ar[ul]^{\sigma_1-\Id} \ar[ur]_{\sigma_2 -\Id}& }\\
\end{tabular}
\end{center}
Similar computations for large $n$ can provide an evidence for general results like  in \cite{ccs12}. 

The reader familiar with computing local system (co)homology using resolutions will see that the cochain complex 
obtained here coincides with that obtained from the standard presentation of $\Br_3$ by these other methods.

The cochain complex above can be easily generalized to the case $g>1$, 
that is the computation of the co\-ho\-mo\-lo\-gy of the group $\Br_{2g+1}$
with coefficients in the representation on the ring of polynomials $\Z[x_1, y_1, \cdots, x_g, y_g].$

\providecommand{\bysame}{\leavevmode\hbox
to3em{\hrulefill}\thinspace}
\def\MR#1{MR#1}

\providecommand{\MRhref}[1]{%
  \href{http://www.ams.org/mathscinet-getitem?mr=#1}{#1}
} 
\providecommand{\href}[2]{\hyperlinks{#1}{#2}}

%
%
%

%
%
%
%
%
%
%
%
%
%


\bibliographystyle{amsalpha}
\bibliography{biblio}
\nocite{*}


\end{document}